\documentclass[12pt]{article}
\usepackage{amsthm,amsmath,amssymb}
\usepackage{graphicx}
\usepackage{booktabs}
\usepackage{geometry}
\usepackage{tikz}

\newtheorem{thm}{Theorem}[section]
\newtheorem{lemma}[thm]{Lemma}
\newtheorem{pro}[thm]{Proposition}
\newtheorem{que}[thm]{Question}
\newtheorem*{claim}{Claim}

\begin{document}

\title{The minimum spectral radius of\\ maximal outerplanar graphs}
\author{Suil O\thanks{Department of Applied Mathematics and Statistics, The State University of New York, Korea, Incheon, 21985, suil.o@sunykorea.ac.kr. Corresponding author. Research supported by the National Research Foundation of Korea (NRF) grant funded by the Korea government(MSIT) No. RS-2025-23523950.}}
\date{}
\maketitle

\begin{abstract}
An outerplanar graph is \emph{maximal} if no edge can be added without losing outerplanarity. Lin and Ning determined the outerplanar graph with the largest spectral radius, and the maximizer is a maximal outerplanar graph. We determine the minimizer. In this paper, we prove that every $n$-vertex maximal outerplanar graph $G$ satisfies $\rho(G)\ge\rho(F_n)$, where $F_n$ is the zig-zag triangulation of the $n$-gon, that is, the square of the path on $n$ vertices, with equality if and only if $G=F_n$. The proof uses three local operations on maximal outerplanar graphs, each of which strictly decreases the spectral radius: the first reverses the way a piece is attached along a chord, and the second and third move a piece from one vertex to its twin across a chord when the twin carries nothing or a single ear, respectively. A graph at which no operation applies is $F_n$, or has spectral radius greater than $4$, or consists of a central triangle with three zig-zag blades of at least three triangles each and has at most $15$ vertices; in the last case it contains one of two explicit graphs on $12$ vertices whose spectral radius exceeds that of $F_{15}$. Since $\rho(F_n)<4$ for all $n$, this completes the proof. The numerical inequalities used along the way are certified by explicit integer vectors with small entries.

\medskip
\noindent\textit{Keywords:} spectral radius, maximal outerplanar graph, square of a path

\noindent\textit{AMS Subject Classification 2020:} 05C50, 05C10, 05C35
\end{abstract}

\section{Introduction}

The \emph{spectral radius} of a graph $G$, written $\rho(G)$, is the largest eigenvalue of its adjacency matrix $A(G)$. In 1986, Brualdi and Solheid~\cite{BrualdiSolheid} raised the problem of maximizing or minimizing the spectral radius over a given family of graphs, and this problem has been studied for many families since then.

A graph is \emph{outerplanar} if it can be drawn in the plane so that all vertices lie on the boundary of the outer face, and an outerplanar graph is \emph{maximal} if no edge can be added without losing outerplanarity. We call such a graph a \emph{maximal outerplanar graph}, or MOP for short. Every MOP on $n\ge3$ vertices has $2n-3$ edges and $n-2$ triangular inner faces, and the boundary of its outer face is a Hamiltonian cycle; see~\cite{Allgeier,HopkinsStaton}.

For outerplanar graphs, Tait and Tobin~\cite{TaitTobin} proved the Cvetkovi\'c--Rowlinson conjecture~\cite{CvetkovicRowlinson} for all sufficiently large $n$, and Lin and Ning~\cite{LinNing} completed it for all $n$: the spectral radius of an $n$-vertex outerplanar graph is maximized uniquely by $K_1\vee P_{n-1}$. Since adding edges increases the spectral radius, this maximizer is a MOP. The minimization problem over all outerplanar graphs is trivial, but over MOPs it is not, since every MOP on $n$ vertices has the same number of edges. The classical model is the theorem of Collatz and Sinogowitz~\cite{CollatzSinogowitz} that the path uniquely minimizes the spectral radius among connected graphs on $n$ vertices. In this paper, we determine the minimizer among MOPs.

For $n\ge3$, let $F_n$ be the MOP obtained from the cycle $v_1v_2\cdots v_nv_1$ by adding the chords
$$v_2v_n,\ v_nv_3,\ v_3v_{n-1},\ v_{n-1}v_4,\ v_4v_{n-2},\ \ldots$$
until the polygon is triangulated (Figure~\ref{fig:Fn}). Listing the vertices in the order $v_1$, $v_2$, $v_n$, $v_3$, $v_{n-1}$, $v_4$, $\ldots$ as $x_1,\ldots,x_n$,
 the inner faces of $F_n$ are the triangles $\{x_i,x_{i+1},x_{i+2}\}$, so $F_n$ is the square $P_n^2$ of the path $x_1x_2\cdots x_n$. For $n\ge5$, its degree sequence is $(4,\ldots,4,3,3,2,2)$.

\begin{figure}[h]
\centering
\begin{tabular}{ccc}
\begin{tikzpicture}[every node/.style={circle,fill=black,inner sep=2pt}]
  \foreach \i in {1,...,5} \node (a\i) at ({90+72*(\i-1)}:1.1) {};
  \foreach \i [remember=\i as \j (initially 5)] in {1,...,5} \draw (a\j)--(a\i);
  \draw (a2)--(a5); \draw (a5)--(a3);
\end{tikzpicture}
&\hspace{0.8cm}
\begin{tikzpicture}[every node/.style={circle,fill=black,inner sep=2pt}]
  \foreach \i in {1,...,6} \node (b\i) at ({90+60*(\i-1)}:1.1) {};
  \foreach \i [remember=\i as \j (initially 6)] in {1,...,6} \draw (b\j)--(b\i);
  \draw (b2)--(b6); \draw (b6)--(b3); \draw (b3)--(b5);
\end{tikzpicture}
&\hspace{0.8cm}
\begin{tikzpicture}[every node/.style={circle,fill=black,inner sep=2pt}]
  \foreach \i in {1,...,7} \node (c\i) at ({90+360/7*(\i-1)}:1.1) {};
  \foreach \i [remember=\i as \j (initially 7)] in {1,...,7} \draw (c\j)--(c\i);
  \draw (c2)--(c7); \draw (c7)--(c3); \draw (c3)--(c6); \draw (c6)--(c4);
\end{tikzpicture}
\\[4pt]
$F_5$ &\hspace{0.8cm} $F_6$ &\hspace{0.8cm} $F_7$
\end{tabular}
\caption{The zig-zag triangulations $F_5$, $F_6$, and $F_7$.}
\label{fig:Fn}
\end{figure}

\begin{thm}\label{thm:main}
For every $n$-vertex maximal outerplanar graph $G$,
$$\rho(G)\ge\rho(F_n),$$
with equality if and only if $G=F_n$.
\end{thm}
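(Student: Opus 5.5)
We follow the strategy outlined in the abstract: pick a minimizer and show that, unless it is $F_n$, some local move strictly lowers $\rho$. Fix $n$ and let $G$ be an $n$-vertex MOP minimizing $\rho$. Among minimizers, choose $G$ with the fewest ``branchings'' in its weak dual tree $T(G)$; here $T(G)$ has one node per inner triangle and one edge per chord, and $T(F_n)$ is exactly the path along which the chords zig-zag. Let $\mathbf{x}>0$ be the Perron vector of $A(G)$. The basic tool is the Rayleigh quotient. If $G'$ is obtained from $G$ by deleting a set of edges at a vertex $u$ and reattaching them at a vertex $w$ with $x_w\le x_u$, then $\mathbf{x}^TA(G')\mathbf{x}\le\mathbf{x}^TA(G)\mathbf{x}$. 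When equality holds, $\mathbf{x}$ would have to be an eigenvector of $G'$ as well, which fails in the situations we consider, so $\rho(G')<\rho(G)$.

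\textbf{Step 1 (the three operations).} Let $uv$ be a chord. It splits $G$ into two MOPs sharing $u,v$; call one of them $H$, the ``piece'' hanging on $uv$. Operation (a) reattaches $H$ with $u$ and $v$ swapped, i.e.\ it reflects the piece across the chord. Operations (b) and (c) take a piece attached at $u$ and move it to the twin vertex $u'$ across a chord, under the hypothesis that $u'$ currently carries nothing, resp.\ only a single ear (a degree-$2$ vertex). For each operation we compare Perron entries, either $x_u$ versus $x_v$ or $x_u$ versus $x_{u'}$. We get the needed inequality by writing the eigen-equations at $u$ and $u'$ and using that the neighbourhood of one is dominated, entrywise in $\mathbf{x}$, by that of the other. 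Then the Rayleigh argument applies, and the resulting graph is still a MOP on $n$ vertices. By minimality of $G$, none of the three operations can apply.

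\textbf{Step 2 (structure of irreducible MOPs).} Next we classify the MOPs to which no operation applies. If $T(G)$ is a path, irreducibility under (a) forces every turn to alternate, so $G=F_n$. Otherwise $T(G)$ has a node of degree $3$, i.e.\ a central triangle with three pieces attached. Irreducibility under (b) and (c) forces each piece to be a zig-zag blade, again by the alternation argument. We now use the fact that $F_n=P_n^2$ has $\rho(F_n)<4$: its degrees are at most $4$ and the row-sum bound is strict because the graph is not regular, so any $G$ containing a subgraph with $\rho>4$ is already done. If some vertex has high degree, or there are two branching triangles, we exhibit such a subgraph with spectral radius above $4$. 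The remaining case is a central triangle with three zig-zag blades, each of at least $3$ triangles, since shorter blades would let (c) apply. We expect that an explicit certificate shows $\rho>4$ once $n\ge16$. This leaves $n\le15$.

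\textbf{Step 3 (finite check) and the main obstacle.} For $12\le n\le15$, every remaining graph contains one of two explicit $12$-vertex graphs $H_1,H_2$. Since $\rho(F_m)$ increases with $m$, it suffices to show $\rho(H_i)>\rho(F_{15})\ge\rho(F_n)$. We certify this with integer vectors $\mathbf{y}$ satisfying $A(H_i)\mathbf{y}\ge c\,\mathbf{y}$ entrywise for some $c$, and $\mathbf{z}$ with $A(F_{15})\mathbf{z}\le c'\mathbf{z}$ for some $c'<c$. Equality for $G\ne F_n$ is excluded because every step is strict. The hardest part is Step 2. We must make sure the three operations genuinely cover every non-path configuration, and find the right Perron-entry comparisons at degree-$3$ branchings, where the monotonicity of $\mathbf{x}$ along blades has to be proved from the eigen-equations rather than assumed. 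We would begin that step by proving that $\mathbf{x}$ is unimodal along each zig-zag blade.
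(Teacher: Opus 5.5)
Your Step~1 rests on a comparison that points the wrong way, and the other steps depend on it. With $\mathbf{x}$ the Perron vector of $G$, moving edges from $u$ to $w$ with $x_w\le x_u$ gives $\mathbf{x}^{\mathsf T}A(G')\mathbf{x}\le\rho(G)\,\mathbf{x}^{\mathsf T}\mathbf{x}$. But a single test vector bounds $\rho(G')$ only from below, so this says nothing about $\rho(G')$. For example, in the path $v_1v_2v_3$ with $\mathbf{x}=(1,\sqrt2,1)$, move the edge $v_2v_3$ from $v_2$ to $v_1$. The quotient drops from $\sqrt2$ to $(1+\sqrt2)/2$, and $\mathbf{x}$ is not an eigenvector of $G'$, yet $G'\cong P_3$. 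So none of your operations is shown to lower $\rho$, and the minimality argument has nothing to stand on.

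A valid version takes the Perron vector $u$ of the \emph{new} graph $G'$. It then exhibits $G$, or a copy of $G$, obtained from $G'$ by moving edges toward the vertex with the larger $u$-entry. The neighborhood domination you invoke (such as $N_G(y)\subseteq N_G(c)$) holds only in the old graph. In $G'$ neither twin dominates, so you cannot predict which entry is larger, and you need a copy of $G$ on each side. This is the twin-symmetry trick in the paper's ear-transfer lemma, where the names of $y$ and $c$ are exchanged. It also adapts to the swap: $G$, and its copy with $s,t$ renamed, both arise from $G^*$ by moving one piece's edges between $s$ and $t$, one in each direction. That needs an asymmetric hypothesis such as $\deg(t)=3$ with $s$ having two further neighbors on each side. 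Your operation (a) has no hypothesis at all, and an unrestricted swap is an involution, so it cannot always lower $\rho$. Step~2 must then be derived from this restricted swap-freeness, as in the paper's skeleton lemma.

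Operation (c) is where even the corrected argument fails. After the transfer, $c$ carries $G_Q$ while $y$ carries $G_R$ and an ear, and these are not interchangeable. If $u_y>u_c$ in $G'$, moving edges from $c$ to $y$ does not give a copy of $G$: moving $G_Q$ over creates an extra vertex of degree~$2$.

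The paper handles this with a Schur complement onto $\{a,c,y\}$. It compares $G$ with a relabelled copy $G''$ of $G'$ in which $G_Q$ and the ear have traded places. A sign analysis of $\det M-\det M''$, which needs $|V(Q^\circ)|\ge2$, gives $f_{G'}(\rho(G))>0$, and interlacing then gives $\rho(G')<\rho(G)$. Your plan has nothing that replaces this. Yet your Step~2 relies on (c) exactly where you conclude that every blade has at least three triangles. That conclusion is what confines the final case to $D(k;o)$ and, for $n\le15$, to the finite check.

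The other missing pieces are finite computations: the subgraphs with $\rho>4$ when there are two branch triangles, and the certificate for $n\ge16$. What you actually lack is a correct spectral comparison for the three operations, above all for (c). Unimodality of $\mathbf{x}$ along blades is not the obstacle.
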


We now describe the proof. Let $L(G)$ be the number of vertices of degree $2$ in a MOP $G$; it equals the number of leaves of the weak dual tree of $G$. The tools are collected in Section~\ref{sec:tools}. The first is the bound $\rho(F_n)<4$ (Lemma~\ref{lem:band}), which reflects the fact that $F_n$ is a finite section of the square of the two-way infinite path, whose spectrum is the range of $2\cos\theta+2\cos2\theta$ and has maximum $4$. Consequently every MOP with spectral radius at least $4$ already satisfies Theorem~\ref{thm:main}, and only MOPs with spectral radius below $4$ need to be compared with $F_n$. The next three tools are local operations, each of which produces a MOP with strictly smaller spectral radius and keeps the degree-$2$ vertices under control. A chord $st$ cuts a MOP into two pieces, and the \emph{swap} reattaches one piece with the ends $s$ and $t$ exchanged. If $t$ has degree $3$ and $s$ has at least two further neighbors on each side, the swap lowers the spectral radius (Lemma~\ref{lem:swap}); this is proved by a Schur complement onto $\{s,t\}$, which reduces the change of the characteristic polynomial to a product of two positive factors, one from each piece. The swap keeps $L$ fixed. The \emph{ear transfer} applies to a vertex $y$ of degree $2$ whose two neighbors $a,b$ span a chord lying in a triangle $\{a,b,c\}$ all of whose sides are chords. The vertices $y$ and $c$ are then twins with respect to $ab$, and $c$ carries two pieces while $y$ carries none; moving one piece from $c$ to $y$ lowers the spectral radius and decreases $L$ by one (Lemma~\ref{lem:ear}). Here the Perron vector of the new graph, together with the symmetry exchanging $y$ and $c$, decides which of the two pieces is to be moved. The \emph{transfer} treats the case in which the twin $y$ carries a single ear instead of nothing, that is, the triangles beyond $ab$ are $\{a,b,y\}$ and one leaf. Moving the piece of $c$ on the side away from the ear to $y$ lowers the spectral radius, keeps $L$ fixed, and places the ear next to a branch triangle, where an ear transfer applies (Lemma~\ref{lem:transfer}). Heuristically, $c$ keeps its other piece, which is larger than an ear, and a piece attached to the heavier of two twins contributes more; a Schur complement onto $\{a,c,y\}$ makes this precise. Finally, in a MOP where no swap applies, every vertex of degree at least $5$ lies in a triangle all of whose sides are chords, and every path of the other triangles in the weak dual is a zig-zag strip (Lemma~\ref{lem:skeleton}).

In Section~\ref{sec:main}, we start from a MOP $G\ne F_n$ and apply the three operations as long as possible. The spectral radius strictly decreases at every step, and the final graph $G^*$ admits none of them. In $G^*$, the triangles beyond a side of a branch triangle form a zig-zag strip of at least three triangles whenever they contain no further branch triangle. We then show that $G^*=F_n$ if $L(G^*)=2$; that $G^*$ consists of one central triangle with three such strips if $L(G^*)=3$, in which case either $\rho(G^*)>4$ or $G^*$ has at most $15$ vertices and contains a fixed graph whose spectral radius exceeds $\rho(F_{15})$; and that $\rho(G^*)>4$ if $L(G^*)\ge4$. The finitely many inequalities used in the last two cases are certified by integer vectors with entries at most $50$, listed in the Appendix, so that they can be checked by hand; a short script rechecks them in exact arithmetic.

\section{Definitions and Tools}\label{sec:tools}

For terms not defined here we refer to~\cite{West,BrouwerHaemers,GodsilRoyle}. For a graph $H$ we write $f_H(\lambda)=\det(\lambda I-A(H))$; for a subgraph $K$ of a connected graph $G$ we have $\rho(K)\le\rho(G)$, with strict inequality if $K\ne G$, and $f_K(\lambda)>0$ for $\lambda>\rho(K)$.

\begin{lemma}\label{lem:MOP}
Let $G$ be an $n$-vertex MOP with $n\ge3$.
\begin{itemize}
\item[\rm (i)] The boundary of the outer face is a Hamiltonian cycle $C$. The edges of $C$ are the \emph{boundary edges}, and the other $n-3$ edges are \emph{chords}. Every inner face is a triangle, and there are $n-2$ of them.
\item[\rm (ii)] For every vertex $u$ of degree $d$, the neighbors of $u$ induce a path $w_1w_2\cdots w_d$, the triangles containing $u$ are $\{u,w_i,w_{i+1}\}$ for $1\le i\le d-1$, and $uw_1,uw_d$ are boundary edges while $uw_2,\ldots,uw_{d-1}$ are chords.
\item[\rm (iii)] Every chord $st$ lies in exactly two triangles, and it cuts $G$ into two MOPs $G_1$ and $G_2$ with $V(G_1)\cap V(G_2)=\{s,t\}$ and $G=G_1\cup G_2$, in each of which $st$ is a boundary edge. Conversely, gluing two MOPs along a boundary edge of each, with either identification of the ends, gives a MOP.
\end{itemize}
\end{lemma}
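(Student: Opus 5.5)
Lemma~\ref{lem:MOP} collects standard structural facts about MOPs, so the plan is to prove them by induction on $n$, starting from the fact that a MOP has a triangulated polygon as an outerplane embedding. First I show that the outer boundary is a Hamiltonian cycle. Since every vertex lies on the outer face and $G$ is maximal, $G$ is $2$-connected for $n\ge3$. Indeed, if $v$ were a cut vertex, we could add an edge between consecutive blocks around $v$ along the outer face and keep outerplanarity. In a $2$-connected plane graph every face is bounded by a cycle, so the outer face is bounded by a cycle $C$ containing all vertices, and $C$ is Hamiltonian. Each inner face is bounded by a cycle. If some inner face had length at least $4$, we could add a chord inside it without disturbing the outer face, contradicting maximality. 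So all inner faces are triangles. The edge and face counts then follow from Euler's formula: $n-e+f=2$, together with $2e=3(f-1)+n$ (counting the outer face of length $n$), gives $f-1=n-2$ triangles and $e=2n-3$. Hence there are $n-3$ chords.

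For (ii), fix $u$ and order the edges at $u$ cyclically in the embedding. Exactly two consecutive edges at $u$ bound the outer face, and these are the two boundary edges $uw_1$ and $uw_d$ of $C$ at $u$. Every other angular sector at $u$ is an inner face, hence a triangle $\{u,w_i,w_{i+1}\}$. This gives the path $w_1\cdots w_d$ among the neighbors, and identifies $uw_2,\dots,uw_{d-1}$ as chords. It remains to check that the neighbors induce exactly this path, with no extra edge $w_iw_j$ for $|i-j|\ge2$. Such an edge together with $u$ would form a triangle separating $w_{i+1}$ from the outer face, contradicting that all vertices lie on $C$. This is the step needing the most care.

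For (iii), a chord $st$ splits the cycle $C$ into two $s$--$t$ arcs. By planarity, no edge joins interior vertices of different arcs, since such an edge would cross $st$ inside the disk bounded by $C$. Let $G_i$ be the subgraph induced by arc $i$ together with $s$ and $t$. Each $G_i$ is outerplanar with Hamiltonian boundary consisting of its arc plus $st$. Every inner face of $G_i$ is an inner face of $G$, hence a triangle, so $G_i$ is maximal. Each side of $st$ contains exactly one triangle on $st$, so $st$ lies in exactly two triangles, since a chord is not a boundary edge. For the converse, glue two MOPs along boundary edges. Place them in the plane on opposite sides of the common segment $st$. The union is then a polygon triangulated by diagonals with all vertices on the outer cycle, so it is outerplanar with all inner faces triangles, and such a graph is a MOP. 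The orientation of the identification does not matter, because we may reflect one piece. The main obstacle overall is only bookkeeping with the embedding; all of these facts are classical (see~\cite{Allgeier,HopkinsStaton}), and one may simply cite them.
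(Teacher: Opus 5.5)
Your argument is correct, but it is not the paper's route. The paper gives no proof of Lemma~\ref{lem:MOP} at all. It treats (i)--(iii) as the classical structure theory of maximal outerplanar graphs and refers, in the Introduction, to Allgeier and to Hopkins--Staton.

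You instead derive everything directly from an outerplane embedding:
\begin{itemize}
\item maximality gives $2$-connectivity and triangular inner faces;
\item Euler's formula gives the counts $2n-3$ and $n-2$;
\item the rotation at $u$ gives (ii);
\item Jordan-curve separation gives both the induced-path property and the splitting along a chord.
\end{itemize}
Your route yields a self-contained proof, whereas the citation yields brevity.

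One step needs an extra sentence. In (iii) you conclude twice, for the pieces $G_i$ and for the glued graph, that a graph with an outerplane embedding whose outer boundary is Hamiltonian and whose inner faces are triangles is a MOP. Maximality is a property of the abstract graph, not of the given embedding, so this does not follow from ``all faces are triangles'' alone. You can close it with the bound $|E|\le 2n-3$ for every $n$-vertex outerplanar graph. That bound follows from your (i), because any outerplanar graph can be extended to a MOP by adding edges. Your Euler computation shows that such a triangulated polygon already has $2n-3$ edges, so no edge can be added.

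A smaller point: in (i), the chord you add inside a face of length at least $4$ must be a non-edge of $G$. The same Jordan-curve argument you use in (ii) shows that two non-consecutive vertices of an inner face are non-adjacent.
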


The \emph{weak dual} $T(G)$ of a MOP $G$ has the inner triangles as nodes, two being adjacent when they share a chord; it is a tree with $n-2$ nodes and maximum degree at most $3$. We call a triangle a \emph{branch triangle} if all three of its sides are chords, that is, if it has degree $3$ in $T(G)$.

\begin{lemma}\label{lem:dual}
Let $G$ be a MOP with $n\ge4$ vertices.
\begin{itemize}
\item[\rm (i)] For every vertex $v$, the triangles containing $v$ form a path in $T(G)$ with $\deg(v)-1$ nodes.
\item[\rm (ii)] A triangle is a leaf of $T(G)$ if and only if it contains a vertex of degree~$2$, and each leaf contains exactly one such vertex. Hence $T(G)$ has exactly $L(G)$ leaves, and $T(G)$ is a path if and only if $L(G)=2$.
\end{itemize}
\end{lemma}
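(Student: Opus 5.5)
The plan is to read everything off Lemma~\ref{lem:MOP}(ii), together with one observation. By definition, the degree of a triangle in $T(G)$ is the number of its sides that are chords. This holds because a chord lies in exactly two triangles (Lemma~\ref{lem:MOP}(iii)), while a boundary edge lies in exactly one, namely the inner face on its side.

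For (i), fix $v$ of degree $d$ and let $w_1\cdots w_d$ be the path induced by its neighbors. By Lemma~\ref{lem:MOP}(ii), the triangles containing $v$ are $\Delta_i=\{v,w_i,w_{i+1}\}$ for $1\le i\le d-1$, so there are $d-1$ of them. For $1\le i\le d-2$, the triangles $\Delta_i$ and $\Delta_{i+1}$ share the edge $vw_{i+1}$, which is a chord because $2\le i+1\le d-1$. Hence consecutive triangles are adjacent in $T(G)$. If $|i-j|\ge2$, then $\{w_i,w_{i+1}\}\cap\{w_j,w_{j+1}\}$ has at most one element, since the $w$'s are distinct. So $\Delta_i$ and $\Delta_j$ share at most one vertex besides $v$, hence no edge, and they are not adjacent. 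The subgraph of $T(G)$ induced by these triangles is therefore exactly the path $\Delta_1\Delta_2\cdots\Delta_{d-1}$.

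For (ii), note first that $n\ge4$ gives $T(G)$ at least $2$ nodes. Since $T(G)$ is connected, no node has degree $0$. Hence a triangle is a leaf exactly when two of its sides are boundary edges.

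Suppose $\{a,b,c\}$ has boundary sides $ab$ and $ac$. By Lemma~\ref{lem:MOP}(ii), the two boundary edges at $a$ lie in the first and last triangles $\Delta_1$ and $\Delta_{d-1}$ at $a$. Both lie in the same triangle, so $d-1=1$ and $\deg(a)=2$.

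Conversely, let $y$ have degree $2$ with neighbors $a,b$. Then $y$ lies in the single triangle $\{y,a,b\}$, and both $ya$ and $yb$ are boundary edges. The side $ab$ cannot also be a boundary edge, since otherwise the Hamiltonian cycle $C$ would be the triangle and $n=3$. So this triangle is a leaf.

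The same argument gives uniqueness. If a triangle contained two vertices of degree $2$, then all three of its sides would be boundary edges, which again forces $n=3$.

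Since each degree-$2$ vertex lies in exactly one triangle, the map sending a leaf to its unique degree-$2$ vertex is a bijection. Therefore $T(G)$ has exactly $L(G)$ leaves. Finally, a tree with at least two nodes is a path if and only if it has exactly two leaves.

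None of these steps is a real obstacle. The only points needing care are the use of $n\ge4$ to exclude isolated nodes and an all-boundary triangle, and the check in (i) that non-consecutive $\Delta_i$ share no edge.
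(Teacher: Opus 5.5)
Your proof is correct and is essentially the paper's: both parts are read off Lemma~\ref{lem:MOP}(ii) and the Hamiltonian boundary cycle. You are more explicit than the paper about where $n\ge4$ is used (no isolated node in $T(G)$, no triangle with three boundary sides), and for uniqueness of the degree-$2$ vertex you argue via boundary edges, whereas the paper notes that $b,c$ lie on the chord $bc$.

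There is one small slip in (i). ``At most one common vertex besides $v$'' does not give ``no common edge'', since sharing $v$ and some $w$ means sharing $vw$. What actually holds is that $\{w_i,w_{i+1}\}$ and $\{w_j,w_{j+1}\}$ are disjoint for $|i-j|\ge2$, and the check is superfluous anyway because $T(G)$ is a tree.
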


\begin{proof}
(i) By Lemma~\ref{lem:MOP}(ii), the triangles containing $v$ are $\{v,w_i,w_{i+1}\}$ for $1\le i\le\deg(v)-1$, and consecutive ones share the chord $vw_{i+1}$.

(ii) A triangle is a leaf of $T(G)$ exactly when it has one chord and two boundary edges. If $\{a,b,c\}$ has chord $bc$ and boundary edges $ab,ac$, then no chord is incident with $a$ inside this triangle, so by (i) the vertex $a$ lies in this triangle only and $\deg(a)=2$; the vertices $b,c$ lie on the chord $bc$ and so have degree at least $3$. Conversely, a vertex $a$ of degree $2$ lies in a single triangle by (i), whose two sides at $a$ are boundary edges, so that triangle is a leaf. A tree whose maximum degree is at most $3$ is a path exactly when it has two leaves.
\end{proof}

\begin{pro}\label{pro:Fn}
Let $n\ge5$. If an $n$-vertex MOP has degree sequence $(4,\ldots,4,3,3,2,2)$, then it is $F_n$.
\end{pro}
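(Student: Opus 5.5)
Exactly two vertices have degree $2$, so by Lemma~\ref{lem:dual}(ii) the weak dual $T(G)$ has exactly two leaves and is therefore a path $\Delta_1\Delta_2\cdots\Delta_{n-2}$. I will first show that $G$ is the square of a path, by induction along this dual path, and then identify that square with $F_n$.

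\textbf{Ordering the vertices.} Let $x_1$ be the degree-$2$ vertex of the leaf $\Delta_1$. Since $\Delta_1\cap\Delta_2$ is a chord, each $\Delta_{i+1}$ contains exactly one vertex not in $\Delta_1\cup\cdots\cup\Delta_i$. Call these new vertices $x_4,\dots,x_n$ in order, and write $\Delta_1=\{x_1,x_2,x_3\}$, where $x_2x_3$ is the chord shared with $\Delta_2$.

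\textbf{Claim.} For every $i$, $\Delta_i=\{x_i,x_{i+1},x_{i+2}\}$, and the chord shared by $\Delta_i$ and $\Delta_{i+1}$ is $x_{i+1}x_{i+2}$.

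The claim holds for $i=1$. Assume it holds for $i$. Then $\Delta_{i+1}$ consists of the shared chord $x_{i+1}x_{i+2}$ together with the new vertex $x_{i+3}$. The next shared chord $\Delta_{i+1}\cap\Delta_{i+2}$ (when $i+1<n-2$) is either $x_{i+2}x_{i+3}$ or $x_{i+1}x_{i+3}$; I must rule out the second option.

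\textbf{Using the degree condition.} By Lemma~\ref{lem:dual}(i), $\deg(v)-1$ equals the number of triangles containing $v$, and these triangles form a consecutive run in the dual path. Every vertex of degree $4$ therefore lies in exactly three consecutive triangles. Suppose the bad chord $x_{i+1}x_{i+3}$ occurs. Then $x_{i+1}$ lies in $\Delta_i,\Delta_{i+1},\Delta_{i+2}$. If $i\ge2$, then $x_{i+1}$ also lies in $\Delta_{i-1}$ by the induction hypothesis, so it lies in at least four triangles and has degree at least $5$, which is impossible.

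\textbf{Main obstacle: the boundary cases.} The cases $i=1$ (and symmetrically near the other end) need separate handling. There the run of triangles containing a vertex is limited by the start of the dual path, so the counting argument above does not apply. A cleaner alternative is to count run lengths globally.

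The runs give $\sum_v(\deg v-1)=3(n-2)$. Each intermediate triangle $\Delta_j$ with $2\le j\le n-3$ is entered by one new vertex. A non-bad step $x_{i+1}x_{i+2}\to x_{i+2}x_{i+3}$ ends the run of $x_{i+1}$ immediately.

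Suppose a bad step occurs at index $i$, so the shared chord becomes $x_{i+1}x_{i+3}$. Then the run of $x_{i+2}$ ends at $\Delta_{i+1}$, while $x_{i+1}$ continues. I would show that this forces either a vertex of degree $\ge5$ or more than two vertices whose runs are short, that is, vertices of degree $\le3$. This contradicts the degree sequence, which has exactly two $3$'s, two $2$'s, and all remaining degrees equal to $4$.

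In particular, at $i=1$ a bad step makes $x_2$ lie only in $\Delta_1$, so $\deg x_2=2$. Then $x_1$ and $x_2$ are two degree-$2$ vertices inside a single leaf triangle. This is impossible, since Lemma~\ref{lem:dual}(ii) says each leaf contains exactly one degree-$2$ vertex. So no bad step occurs at $i=1$. For $i\ge2$ the bad step is excluded by the counting argument above, which completes the induction.

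\textbf{Identifying $G$ with $F_n$.} By the claim, the inner faces of $G$ are exactly the triangles $\{x_i,x_{i+1},x_{i+2}\}$. A MOP is determined by its triangles, so $G=P_n^2$ on $x_1\cdots x_n$. As noted in the introduction, $P_n^2$ is exactly $F_n$.
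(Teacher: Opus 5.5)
There is a genuine error at the first step of your induction, the passage from $i=1$ to $i=2$. You name the two ends of the chord $\Delta_1\cap\Delta_2$ as $x_2,x_3$ arbitrarily. You then claim that the ``bad'' chord $\Delta_2\cap\Delta_3=x_2x_4$ is impossible because it would make $x_2$ lie only in $\Delta_1$, so that $\deg x_2=2$. That is false. The vertex $x_2$ is an end of the chord $\Delta_1\cap\Delta_2$, so it always lies in $\Delta_2$. In the bad case $x_2$ lies in $\Delta_1,\Delta_2,\Delta_3$ and has degree $4$, while it is $x_3$ that lies only in $\Delta_1,\Delta_2$ and has degree $3$. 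This is consistent with the degree sequence and with Lemma~\ref{lem:dual}(ii).

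In fact $F_n$ itself shows this configuration. Write $F_n=P_n^2$ on $y_1,\dots,y_n$ and take $x_1=y_1$, $x_2=y_3$, $x_3=y_2$. Then $x_4=y_4$ and $\Delta_2\cap\Delta_3=y_3y_4=x_2x_4$. So this case cannot be ruled out by a contradiction. It has to be absorbed by the choice of labels.

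The repair is the one the paper makes when it says that one of $a,b$ retires at $T_2$, ``say $b$,'' and notes that the other choice is a reflection. The chord $\Delta_2\cap\Delta_3$ is a side of $\Delta_2$ other than $x_2x_3$. Hence it consists of $x_4$ and exactly one end of $x_2x_3$; call that end $x_3$. With this labeling the claim holds at $i=2$.

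From then on your four-triangle count handles every step with $i\ge2$: the bad chord would place $x_{i+1}$ in $\Delta_{i-1},\Delta_i,\Delta_{i+1},\Delta_{i+2}$, forcing degree at least $5$. This includes the steps near the far end of the dual path. So there is no ``symmetric'' boundary case there, and the unfinished global run-length sketch is unnecessary. After this fix, your argument is essentially the paper's proof.
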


\begin{proof}
Let $G$ be such a MOP. By Lemma~\ref{lem:dual}, $T(G)$ is a path $T_1T_2\cdots T_m$ with $m=n-2\ge3$, and every vertex $v$ lies in a block of $\deg(v)-1\le3$ consecutive triangles; we say that $v$ \emph{enters} at the first triangle of its block and \emph{retires} at the last. For $1\le i\le m-1$ let $c_i$ be the chord shared by $T_i$ and $T_{i+1}$. For $2\le i\le m-1$, the chords $c_{i-1}$ and $c_i$ are distinct sides of $T_i$ and meet in one vertex; writing $T_i=\{p,x,y\}$ with $c_{i-1}=\{p,x\}$ and $c_i=\{p,y\}$, the vertex $x$ retires at $T_i$ and $y$ enters at $T_i$.

The leaf $T_1$ consists of a vertex of degree $2$ and the chord $c_1=\{a,b\}$. Let $a_3$ enter at $T_2$, so that $T_2=\{a,b,a_3\}$. One of $a,b$ retires at $T_2$, say $b$; then $\deg(b)=3$. We show by induction on $i\ge3$ that the three vertices of $T_i$ entered at $T_{i-2}$, $T_{i-1}$, and $T_i$. This holds for $T_3=\{a,a_3,a_4\}$. If it holds for $T_i$, then the vertex of $T_i$ that entered at $T_{i-2}$ already lies in three triangles, so it retires at $T_i$, and $T_{i+1}$ consists of the vertices that entered at $T_{i-1}$, $T_i$, and $T_{i+1}$. Thus, up to the choice of $b$ at $T_2$, which is a reflection, every step removes the oldest vertex and joins the new vertex to the two remaining ones. This is the construction of $F_n$.
\end{proof}

\begin{lemma}\label{lem:band}
For every $n\ge3$, $\rho(F_n)<4$.
\end{lemma}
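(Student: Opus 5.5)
Since $F_n=P_n^2$ is the square of the path $x_1x_2\cdots x_n$, the plan is to realize it as an induced subgraph of the square of a longer path, or of a cycle, whose spectral radius we can compute exactly. Consider the circulant graph $C_N^2$ on $\mathbb{Z}_N$, with $i$ adjacent to $i\pm1,i\pm2$. For $N\ge5$ it is $4$-regular, so $\rho(C_N^2)=4$. For $N\ge n+2$, the map $x_i\mapsto i$ embeds $F_n$ into $C_N^2$. Indeed, two vertices of $\{1,\dots,n\}$ are adjacent in $C_N^2$ exactly when their difference is $\pm1$ or $\pm2$ modulo $N$. Since all differences lie in $[-(n-1),n-1]$ and $N\ge n+2$, such a congruence forces $|i-j|\in\{1,2\}$. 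So the embedding is induced and $F_n$ is a subgraph of $C_N^2$.

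$C_N^2$ is connected and $F_n$ is a proper subgraph of it, since it misses at least the two vertices $n+1,n+2$. The fact recalled at the start of Section~\ref{sec:tools} then gives $\rho(F_n)<\rho(C_N^2)=4$. We just take $N=n+2$; then $N\ge5$ for every $n\ge3$, so regularity holds.

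If one prefers to avoid the subgraph monotonicity for proper spanning-deficient subgraphs, a direct Perron–Frobenius argument also works. Let $\mathbf{x}$ be a positive Perron vector of $F_n$. Then
$$\rho(F_n)\,x_v=\sum_{u\sim v}x_u\le \deg(v)\max_u x_u\le 4\max_u x_u .$$
This gives $\rho\le4$. For strictness, evaluate at a vertex where $\mathbf{x}$ attains its maximum: equality would force all neighbors to have the same maximal value and degree $4$. By connectivity, equality would then propagate to all vertices, which is impossible because $x_1$ has degree $2$.

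There is no real obstacle. The only care needed is checking the embedding is correct, i.e. that the wrap-around adjacencies of $C_N^2$ do not create extra edges. That is why we take $N\ge n+2$ rather than $N=n$. The small cases $n=3,4$ are covered by the same argument, and in any case $F_3=K_3$ and $F_4=K_4-e$ have spectral radius at most $3$ by direct inspection.
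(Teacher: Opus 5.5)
Your proof is correct, but your main route differs from the paper's. You embed $F_n$ in the connected $4$-regular circulant $C_{n+2}^2$ and invoke strict monotonicity of $\rho$ for proper subgraphs of a connected graph, a Perron--Frobenius fact. The paper never leaves $F_n$. It bounds the quadratic form edge by edge using $2z_iz_j\le z_i^2+z_j^2$, which amounts to $z^{\mathsf T}A(F_n)z\le\sum_v\deg(v)z_v^2$. Strictness then comes from the explicit deficit $2z_1^2+z_2^2+z_{n-1}^2+2z_n^2$ at the low-degree end vertices.

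Your fallback argument, which looks at the largest Perron entry, is the eigenvector form of this same ``$\rho\le\Delta$, with equality only for regular graphs'' reasoning, so it is essentially the paper's proof. The embedding is closer in spirit to the paper's heuristic that $F_n$ is a finite section of the square of the two-way infinite path; you use a periodic finite version of it. The paper's argument, by contrast, is self-contained, needs no Perron--Frobenius theory, and gives a quantitative deficit.

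One small point: the care you take to make the embedding induced is unnecessary, since extra edges in the host graph only help. For $n\ge5$, $F_n$ is already a proper spanning subgraph of the $4$-regular graph $C_n^2$. Choosing $N=n+2$ is simply a convenient way to cover $n=3,4$ uniformly, because $C_3^2$ and $C_4^2$ are not $4$-regular.
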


\begin{proof}
Write $F_n=P_n^2$ on $x_1,\ldots,x_n$ as in the Introduction. For a nonzero real vector $z$, the inequality $2z_iz_j\le z_i^2+z_j^2$ gives
$$z^{\mathsf T}A(F_n)z=2\sum_{i=1}^{n-1}z_iz_{i+1}+2\sum_{i=1}^{n-2}z_iz_{i+2}\le4\sum_{i=1}^nz_i^2-\bigl(2z_1^2+z_2^2+z_{n-1}^2+2z_n^2\bigr)\le4\sum_{i=1}^nz_i^2 .$$
Equality throughout would force $z_1=z_2=z_{n-1}=z_n=0$ and $z_i=z_{i+1}$ for all $i$, that is, $z=0$. Hence $z^{\mathsf T}A(F_n)z<4z^{\mathsf T}z$ for every $z\ne0$.
\end{proof}

\begin{figure}[t]
\centering
\tikzset{v/.style={circle,fill=black,inner sep=1.4pt},
         piece/.style={fill=gray!25,draw=black,thin},
         ear/.style={fill=gray!10,draw=black,thin},
         lab/.style={font=\footnotesize}}
\newcommand{\core}{%
  \draw (A)--(B); \draw (A)--(P); \draw (B)--(P); \draw (A)--(Q2); \draw (B)--(Q2);}
\begin{tabular}{c}
\begin{tikzpicture}[scale=0.62]
 \coordinate (s) at (0,1); \coordinate (t) at (0,-1); \coordinate (a1) at (-1.2,0); \coordinate (a2) at (1.2,0);
 \fill[piece] (s)--(a1)--(-2.1,1.1)--(-1.1,2.1)--cycle; \node[lab] at (-1.2,1.25) {$G_1$};
 \fill[piece] (s)--(a2)--(2.1,1.1)--(1.1,2.1)--cycle; \node[lab] at (1.2,1.25) {$G_2$};
 \draw (s)--(t) (s)--(a1)--(t)--(a2)--(s);
 \foreach \p in {s,t,a1,a2} \node[v] at (\p) {};
 \node[lab,above] at (s) {$s$}; \node[lab,below] at (t) {$t$}; \node[lab,left] at (a1) {$a_1$}; \node[lab,right] at (a2) {$a_2$};
 \draw[->,thick] (2.6,0)--(3.5,0);
 \begin{scope}[xshift=6.1cm]
 \coordinate (s) at (0,1); \coordinate (t) at (0,-1); \coordinate (a1) at (-1.2,0); \coordinate (a2) at (1.2,0);
 \fill[piece] (s)--(a1)--(-2.1,1.1)--(-1.1,2.1)--cycle; \node[lab] at (-1.2,1.25) {$G_1$};
 \fill[piece] (t)--(a2)--(2.1,-1.1)--(1.1,-2.1)--cycle; \node[lab] at (1.2,-1.25) {$\pi(G_2)$};
 \draw (s)--(t) (s)--(a1)--(t)--(a2)--(s);
 \foreach \p in {s,t,a1,a2} \node[v] at (\p) {};
 \node[lab,above] at (s) {$s$}; \node[lab,below] at (t) {$t$}; \node[lab,left] at (a1) {$a_1$}; \node[lab,right] at (a2) {$a_2$};
 \end{scope}
\end{tikzpicture}\\[-2pt]
{\small (a) Swap at $(s,t)$, where $\deg(t)=3$.}\\[10pt]
\begin{tikzpicture}[scale=0.62]
 \coordinate (a) at (0,1); \coordinate (b) at (0,-1); \coordinate (y) at (-1.2,0); \coordinate (c) at (1.2,0);
 \fill[piece] (c)--(a)--(1.1,2.1)--(2.1,1.1)--cycle; \node[lab] at (1.2,1.25) {$G_Q$};
 \fill[piece] (c)--(b)--(1.1,-2.1)--(2.1,-1.1)--cycle; \node[lab] at (1.2,-1.25) {$G_R$};
 \draw (a)--(b) (a)--(y)--(b)--(c)--(a);
 \foreach \p in {a,b,y,c} \node[v] at (\p) {};
 \node[lab,above] at (a) {$a$}; \node[lab,below] at (b) {$b$}; \node[lab,left] at (y) {$y$}; \node[lab,right] at (c) {$c$};
 \draw[->,thick] (2.6,0)--(3.5,0);
 \begin{scope}[xshift=6.1cm]
 \coordinate (a) at (0,1); \coordinate (b) at (0,-1); \coordinate (y) at (-1.2,0); \coordinate (c) at (1.2,0);
 \fill[piece] (c)--(a)--(1.1,2.1)--(2.1,1.1)--cycle; \node[lab] at (1.2,1.25) {$G_Q$};
 \fill[piece] (y)--(b)--(-1.1,-2.1)--(-2.1,-1.1)--cycle; \node[lab] at (-1.2,-1.25) {$G_R$};
 \draw (a)--(b) (a)--(y)--(b)--(c)--(a);
 \foreach \p in {a,b,y,c} \node[v] at (\p) {};
 \node[lab,above] at (a) {$a$}; \node[lab,below] at (b) {$b$}; \node[lab,left] at (y) {$y$}; \node[lab,right] at (c) {$c$};
 \end{scope}
\end{tikzpicture}\\[-2pt]
{\small (b) Ear transfer at $y$, where $\deg(y)=2$ and $\{a,b,c\}$ is a branch triangle.}\\[10pt]
\begin{tikzpicture}[scale=0.62]
 \coordinate (a) at (0,1); \coordinate (b) at (0,-1); \coordinate (y) at (-1.2,0); \coordinate (c) at (1.2,0); \coordinate (z) at (-1.3,1.5);
 \fill[piece] (c)--(a)--(1.1,2.1)--(2.1,1.1)--cycle; \node[lab] at (1.2,1.25) {$G_Q$};
 \fill[piece] (c)--(b)--(1.1,-2.1)--(2.1,-1.1)--cycle; \node[lab] at (1.2,-1.25) {$G_R$};
 \fill[ear] (a)--(y)--(z)--cycle;
 \draw (a)--(b) (a)--(y)--(b)--(c)--(a) (a)--(z)--(y);
 \foreach \p in {a,b,y,c,z} \node[v] at (\p) {};
 \node[lab,above] at (a) {$a$}; \node[lab,below] at (b) {$b$}; \node[lab,left] at (y) {$y$}; \node[lab,right] at (c) {$c$}; \node[lab,left] at (z) {$z$};
 \draw[->,thick] (2.6,0)--(3.5,0);
 \begin{scope}[xshift=6.1cm]
 \coordinate (a) at (0,1); \coordinate (b) at (0,-1); \coordinate (y) at (-1.2,0); \coordinate (c) at (1.2,0); \coordinate (z) at (-1.3,1.5);
 \fill[piece] (c)--(a)--(1.1,2.1)--(2.1,1.1)--cycle; \node[lab] at (1.2,1.25) {$G_Q$};
 \fill[piece] (y)--(b)--(-1.1,-2.1)--(-2.1,-1.1)--cycle; \node[lab] at (-1.2,-1.25) {$G_R$};
 \fill[ear] (a)--(y)--(z)--cycle;
 \draw (a)--(b) (a)--(y)--(b)--(c)--(a) (a)--(z)--(y);
 \foreach \p in {a,b,y,c,z} \node[v] at (\p) {};
 \node[lab,above] at (a) {$a$}; \node[lab,below] at (b) {$b$}; \node[lab,left] at (y) {$y$}; \node[lab,right] at (c) {$c$}; \node[lab,left] at (z) {$z$};
 \end{scope}
\end{tikzpicture}\\[-2pt]
{\small (c) Transfer at $y$, where $\deg(y)=3$, $\deg(z)=2$, and $\{a,b,c\}$ is a branch triangle.}
\end{tabular}
\caption{The three operations, each shown before and after. Shaded regions are the parts of the graph cut off by the chords shown; in~(a), $G_j$ consists of the triangle $\{s,t,a_j\}$ together with the shaded region beyond it. The piece that moves is $G_2$ in~(a) and $G_R$ in~(b) and~(c). In~(b), $L$ decreases by one. In~(c), $\{a,b,y\}$ becomes a branch triangle adjacent to the leaf $\{a,y,z\}$, so that an ear transfer at $z$ becomes available.}
\label{fig:ops}
\end{figure}

The first operation reattaches a piece along a chord (Figure~\ref{fig:ops}(a)). Let $st$ be a chord of a MOP $G$, and let $G_1$ and $G_2$ be the two MOPs into which it cuts $G$, as in Lemma~\ref{lem:MOP}(iii). Let $\pi$ be the bijection of $V(G_2)$ that exchanges $s$ and $t$ and fixes all other vertices. The \emph{swap} of $G$ at $(s,t)$ is the graph $G_1\cup\pi(G_2)$.

\begin{lemma}\label{lem:swap}
Let $st$ be a chord of a MOP $G$ with $\deg_G(t)=3$, and suppose that $s$ has at least two neighbors in $V(G_j)\setminus\{s,t\}$ for $j=1,2$. Then the swap $G^*$ of $G$ at $(s,t)$ is a MOP, the vertices of degree $2$ in $G^*$ are those of $G$, and $\rho(G^*)<\rho(G)$.
\end{lemma}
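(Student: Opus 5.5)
The plan has two parts: a combinatorial check that $G^*$ is a MOP with the same degree-$2$ vertices, and a Schur complement onto $\{s,t\}$. The Schur complement should reduce $f_{G^*}-f_G$ to a product of one factor from each piece, and each factor will be shown to be positive.

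\emph{Combinatorial part.} By Lemma~\ref{lem:MOP}(ii), since $\deg_G(t)=3$, the neighbors of $t$ induce a path $a_1sa_2$, and the triangles at $t$ are $\{s,t,a_1\}\subseteq G_1$ and $\{s,t,a_2\}\subseteq G_2$. Thus $a_j$ is the only neighbor of $t$ in $V(G_j)\setminus\{s,t\}$. The edge $st$ is a boundary edge of $G_2$, so it is a boundary edge of $\pi(G_2)$. Hence $G^*$ is a MOP by the converse part of Lemma~\ref{lem:MOP}(iii).

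Degrees of vertices other than $s,t$ do not change. In $G^*$ the vertex $s$ is adjacent to $t$, to $a_2$, and to its at least two neighbors in $G_1-\{s,t\}$, so $\deg_{G^*}(s)\ge4$. Symmetrically, $t$ is adjacent to $s$, to $a_1$, and to the at least two neighbors of $s$ in $G_2-\{s,t\}$, so $\deg_{G^*}(t)\ge4$. In $G$, both $s$ and $t$ lie on a chord and so have degree at least $3$. Therefore the degree-$2$ vertices of $G$ and $G^*$ coincide.

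\emph{Spectral part.} Let $H_j=G_j-\{s,t\}$. Fix $\lambda\ge\rho(G)$. Each $H_j$ is a proper subgraph of $G$, so $\lambda>\rho(H_j)$. Hence $R_j=(\lambda I-A(H_j))^{-1}=\sum_k\lambda^{-k-1}A(H_j)^k$ is entrywise nonnegative with positive diagonal, and $f_{H_j}(\lambda)>0$.

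Let $c_s^{(j)},c_t^{(j)}$ be the indicator vectors of the neighbors of $s,t$ in $H_j$. Put
$$\alpha_j=c_s^{(j)\mathsf T}R_jc_s^{(j)},\quad \beta_j=c_t^{(j)\mathsf T}R_jc_t^{(j)}=(R_j)_{a_ja_j},\quad \gamma_j=c_s^{(j)\mathsf T}R_jc_t^{(j)}.$$
The Schur complement gives
$$f_G(\lambda)=f_{H_1}f_{H_2}\det\begin{pmatrix}\lambda-\alpha_1-\alpha_2&-(1+\gamma_1+\gamma_2)\\-(1+\gamma_1+\gamma_2)&\lambda-\beta_1-\beta_2\end{pmatrix}.$$
For $G^*$ the only change is that $\pi$ exchanges the roles of $s$ and $t$ in the second piece. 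So $\alpha_2$ and $\beta_2$ are interchanged, while $H_2$ and $\gamma_2$ stay the same. The off-diagonal entries are therefore unchanged, and expanding the diagonal products gives
$$f_{G^*}(\lambda)-f_G(\lambda)=f_{H_1}(\lambda)f_{H_2}(\lambda)\,(\alpha_1-\beta_1)(\alpha_2-\beta_2).$$

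\emph{Positivity of $\alpha_j-\beta_j$.} The vertex $a_j$ is a neighbor of $s$ in $H_j$. By hypothesis $s$ has a second neighbor $w\ne a_j$ in $H_j$. Then
$$\alpha_j\ge(R_j)_{a_ja_j}+(R_j)_{ww}>\beta_j,$$
because all entries of $R_j$ are nonnegative and the diagonal entries are positive. Hence $f_{G^*}(\lambda)>f_G(\lambda)\ge0$ for every $\lambda\ge\rho(G)$. Since $\rho(G^*)$ is the largest root of $f_{G^*}$, it follows that $\rho(G^*)<\rho(G)$.

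The main point needing care is this sign bookkeeping: that the swap changes only the diagonal of the $2\times2$ Schur matrix, and that the strict inequality genuinely uses the hypothesis of a second neighbor of $s$ on each side. If the Schur step were awkward at $\lambda=\rho(G)$, I would instead run the same identity for $\lambda$ slightly above $\rho(G)$ and pass to the limit.
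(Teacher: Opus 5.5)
Your proof is correct and follows the paper's argument: the same Schur complement onto $\{s,t\}$, the same identity $f_{G^*}-f_G=f_{H_1}f_{H_2}(\alpha_1-\beta_1)(\alpha_2-\beta_2)$, and the same positivity of each factor coming from a second neighbor $w\ne a_j$ of $s$. The only difference is the last step: you note that the identity and the positivity hold for every $\lambda\ge\rho(G)$, so $f_{G^*}>0$ on $[\rho(G),\infty)$, whereas the paper evaluates only at $\lambda_0=\rho(G)$ and then uses interlacing (via $G^*-t\subseteq G-t$) to rule out $\rho(G^*)\ge\lambda_0$, so your ending is slightly more direct.
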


\begin{proof}
Since $\deg_G(t)=3$ and $t$ lies in a triangle on each side of $st$, in $G_j$ the vertex $t$ has exactly the two neighbors $s$ and $a_j$, where $\{s,t,a_j\}$ is the triangle of $G_j$ on $st$. By Lemma~\ref{lem:MOP}(iii), $G^*$ is a MOP. In passing from $G$ to $G^*$, every vertex of $V(G_2)\setminus\{a_2,s,t\}$ that is adjacent to $s$ becomes adjacent to $t$ instead, while $a_2$ remains adjacent to both $s$ and $t$. Thus only the degrees of $s$ and $t$ change, and both remain at least $3$.

Put $S=\{s,t\}$ and $X_j=G_j-S$. The graphs $X_1$ and $X_2$ are disjoint, and no edge of $G$ or $G^*$ joins them. Let $C_j$ be the $V(X_j)\times S$ adjacency matrix in $G$, and for $\lambda$ larger than $\rho(X_1)$ and $\rho(X_2)$ let $r_j=(\lambda I-A(X_j))^{-1}$ and $T_j=C_j^{\mathsf T}r_jC_j$, a symmetric $2\times2$ matrix indexed by $(s,t)$. With $J$ the adjacency matrix of the edge $st$, the Schur complement onto $S$ gives
$$f_G=f_{X_1}f_{X_2}\det(N-T_2),\qquad f_{G^*}=f_{X_1}f_{X_2}\det(N-\Pi T_2\Pi),\qquad N=\lambda I_2-J-T_1,$$
where $\Pi$ is the permutation matrix exchanging $s$ and $t$. For $2\times2$ matrices, $\det(N-Y)=\det N-\operatorname{tr}(\operatorname{adj}(N)Y)+\det Y$, and $\det(\Pi T_2\Pi)=\det T_2$. Moreover $T_2-\Pi T_2\Pi=\delta_2\operatorname{diag}(1,-1)$ with $\delta_2=(T_2)_{ss}-(T_2)_{tt}$, and $\operatorname{adj}(N)=\bigl(\begin{smallmatrix}N_{tt}&-N_{st}\\-N_{st}&N_{ss}\end{smallmatrix}\bigr)$ with $N_{tt}-N_{ss}=(T_1)_{ss}-(T_1)_{tt}=:\delta_1$. Hence
\begin{equation}\label{eq:swap}
f_{G^*}-f_G=f_{X_1}f_{X_2}\,\delta_1\delta_2,\qquad \delta_j=\sum_{w,w'\in N(s)\cap V(X_j)}(r_j)_{ww'}-\sum_{w,w'\in N(t)\cap V(X_j)}(r_j)_{ww'} .
\end{equation}

Let $\lambda_0=\rho(G)$. Then $f_G(\lambda_0)=0$ and $f_{X_j}(\lambda_0)>0$. The matrix $r_j$ is entrywise nonnegative with positive diagonal, $N(t)\cap V(X_j)=\{a_j\}\subseteq N(s)\cap V(X_j)$, and $N(s)\cap V(X_j)$ contains a vertex $w\ne a_j$; hence $\delta_j(\lambda_0)\ge(r_j)_{ww}>0$, and $f_{G^*}(\lambda_0)>0$ by~\eqref{eq:swap}. The graph $G^*-t$ is a subgraph of $G-t$, so $\rho(G^*-t)<\lambda_0$, and by interlacing the second largest eigenvalue of $G^*$ is less than $\lambda_0$. Hence all roots of $f_{G^*}$ other than $\rho(G^*)$ lie below $\lambda_0$, and $f_{G^*}(\lambda_0)>0$ forces $\rho(G^*)<\lambda_0$.
\end{proof}

The second operation lowers $L$ (Figure~\ref{fig:ops}(b)). Let $y$ be a vertex of degree $2$ with neighbors $a$ and $b$, and suppose that the other triangle $\{a,b,c\}$ on the chord $ab$ is a branch triangle. The chords $ca$ and $cb$ cut off MOPs $G_Q\ni c,a$ and $G_R\ni c,b$ that do not contain $y$; put $Q^\circ=G_Q-\{a,c\}$ and $R^\circ=G_R-\{b,c\}$, which are nonempty. The \emph{ear transfer} at $y$ replaces every edge $cw$ with $w\in V(R^\circ)$ by the edge $yw$.

\begin{lemma}\label{lem:ear}
Let $G'$ be obtained from a MOP $G$ by an ear transfer at $y$ as above. Then $G'$ is a MOP, $L(G')=L(G)-1$, and $\rho(G')<\rho(G)$.
\end{lemma}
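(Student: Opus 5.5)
The plan is to handle the structural claims with Lemma~\ref{lem:MOP}(iii) and degree counting, and the spectral claim with a Rayleigh-quotient comparison using the Perron vector of $G'$. The key point is that $y$ and $c$ are twins with respect to $ab$, so the two possible transfers give isomorphic graphs.

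\emph{Structure.} Let $H=G-V(R^\circ)$. This is $G_Q$ together with the triangles $\{a,b,c\}$ and $\{a,b,y\}$, and it is a MOP in which $yb$ and $cb$ are boundary edges. The piece $G_R$ is a MOP with boundary edge $cb$. Then $G'$ is obtained by gluing $G_R$ to $H$ along $yb$, identifying the vertex $c$ of $G_R$ with $y$ and fixing $b$; by Lemma~\ref{lem:MOP}(iii) it is a MOP. Only the degrees of $y$ and $c$ change.
\begin{itemize}
\item[$\bullet$] In $G'$, $y$ is adjacent to $a$, $b$ and at least one vertex of $R^\circ$, so $\deg_{G'}(y)\ge3$.
\item[$\bullet$] $c$ keeps its neighbors $a$, $b$ and at least one vertex of $Q^\circ$, so $\deg_{G'}(c)\ge3$; in $G$ it had degree at least $4$ and was not counted in $L(G)$.
\end{itemize}
Hence the degree-$2$ vertices of $G'$ are exactly those of $G$ other than $y$, so $L(G')=L(G)-1$.

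\emph{Spectral inequality.} Let $x$ be the positive Perron vector of $G'$ and $\lambda=\rho(G')$. Write $W_R=N_G(c)\cap V(R^\circ)$ and $W_Q=N_G(c)\cap V(Q^\circ)$; both are nonempty. We split into two cases.
\begin{itemize}
\item[$\bullet$] \emph{Case $x_c\ge x_y$.} Moving the edges $yw$, $w\in W_R$, back to $c$ gives $G$, so
$$x^{\mathsf T}A(G)x-x^{\mathsf T}A(G')x=2(x_c-x_y)\sum_{w\in W_R}x_w\ge0 .$$
Hence $\rho(G)\ge\lambda$.
\item[$\bullet$] \emph{Case $x_y>x_c$.} Let $G''$ be obtained from $G'$ by replacing every edge $cw$, $w\in W_Q$, with $yw$. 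In $G''$ the vertex $y$ carries both pieces and $c$ carries none. Since $\{a,b,y\}$ and $\{a,b,c\}$ play symmetric roles, $G''\cong G$ via the map exchanging $y$ and $c$. The same computation gives $x^{\mathsf T}A(G'')x-x^{\mathsf T}A(G')x=2(x_y-x_c)\sum_{w\in W_Q}x_w>0$, so $\rho(G)=\rho(G'')>\lambda$.
\end{itemize}
This is how the Perron vector together with the $y$--$c$ symmetry decides which piece is effectively moved.

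\emph{Strictness in the equality case.} The only remaining step, and the one needing care, is to rule out $\rho(G)=\lambda$ in the first case. If equality held, $x$ would maximize the Rayleigh quotient of $A(G)$ and so would be an eigenvector of $A(G)$ for $\lambda$. Compare the eigen-equations at $c$. In $G$ we get $\lambda x_c=\sum_{N_G(c)}x_w$, and $N_G(c)=N_{G'}(c)\cup W_R$ with $W_R\ne\emptyset$ disjoint from $N_{G'}(c)$. In $G'$ we get $\lambda x_c=\sum_{N_{G'}(c)}x_w$. Subtracting gives $\sum_{w\in W_R}x_w=0$, which contradicts $x>0$. Therefore $\rho(G')<\rho(G)$ in all cases.
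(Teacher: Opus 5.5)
Your proposal is correct and follows the paper's proof essentially step for step. It uses the same gluing of $G_R$ along $yb$ to get the MOP property and the degree count for $L(G')=L(G)-1$. It uses the same two-case Rayleigh-quotient comparison with the Perron vector of $G'$, with the twin isomorphism exchanging $y$ and $c$ when $x_y>x_c$. It also uses the same eigen-equation at $c$ to rule out equality.
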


\begin{proof}
The graph $G'$ is obtained by gluing $G_R$ to the boundary edge $yb$ of the MOP $G-V(R^\circ)$, with $c$ placed on $y$; by Lemma~\ref{lem:MOP}(iii) it is a MOP. Only the degrees of $y$ and $c$ change. In $G'$ the vertex $y$ is adjacent to $a$, $b$, and at least one vertex of $R^\circ$, and $c$ is adjacent to $a$, $b$, and at least one vertex of $Q^\circ$. Thus $y$ is no longer of degree $2$ and no new vertex of degree $2$ appears, so $L(G')=L(G)-1$.

Let $u$ be the positive unit Perron vector of $G'$. Write $W_R=N_G(c)\cap V(R^\circ)$ and $W_Q=N_G(c)\cap V(Q^\circ)$; both are nonempty. In $G'$, the vertex $c$ is joined to $W_Q$ and $y$ is joined to $W_R$.

Suppose first that $u_c\ge u_y$. Since $G$ is obtained from $G'$ by moving the edges from $W_R$ back from $y$ to $c$,
$$\rho(G)\ge u^{\mathsf T}A(G)u=u^{\mathsf T}A(G')u+2(u_c-u_y)\sum_{w\in W_R}u_w\ge\rho(G').$$
If equality held, $u$ would maximize the Rayleigh quotient of $A(G)$ and so satisfy $A(G)u=\rho(G)u$; together with $A(G')u=\rho(G')u$ this gives $(A(G)-A(G'))u=0$, whose coordinate at $c$ equals $\sum_{w\in W_R}u_w>0$. Hence $\rho(G)>\rho(G')$.

Suppose now that $u_y>u_c$. Since $N_G(y)=\{a,b\}\subseteq N_G(c)$ and $yc\notin E(G)$, exchanging the names of $y$ and $c$ turns $G$ into an isomorphic graph $\widehat G$, in which $y$ is joined to $a$, $b$, $W_Q$, and $W_R$, and $c$ is joined to $a$ and $b$ only. Then $G'$ is obtained from $\widehat G$ by moving the edges from $W_Q$ from $y$ to $c$, and the same computation gives $\rho(G)=\rho(\widehat G)\ge\rho(G')+2(u_y-u_c)\sum_{w\in W_Q}u_w>\rho(G')$.
\end{proof}

The third operation removes an arm of two triangles (Figure~\ref{fig:ops}(c)). Let $\{a,b,c\}$ be a branch triangle, and let $\{a,b,y\}$ be the other triangle on $ab$. Suppose that $\deg(y)=3$ and that the third neighbor $z$ of $y$ has degree $2$ and is adjacent to $a$; thus the triangles beyond $ab$ are $\{a,b,y\}$ and the leaf $\{a,y,z\}$. As before, the chords $ca$ and $cb$ cut off MOPs $G_Q\ni c,a$ and $G_R\ni c,b$ that do not contain $y$, and we put $Q^\circ=G_Q-\{a,c\}$ and $R^\circ=G_R-\{b,c\}$. The \emph{transfer} at $y$ replaces every edge $cw$ with $w\in V(R^\circ)$ by the edge $yw$; that is, it moves the piece of $c$ on the side away from the ear to $y$.

\begin{lemma}\label{lem:transfer}
Let $G'$ be obtained from a MOP $G$ by a transfer at $y$ as above, and suppose that $|V(Q^\circ)|\ge2$. Then $G'$ is a MOP, $L(G')=L(G)$, the triangle $\{a,b,y\}$ is a branch triangle of $G'$ adjacent to the leaf $\{a,y,z\}$, and $\rho(G')<\rho(G)$.
\end{lemma}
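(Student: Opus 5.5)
The structural claims come first and are routine; the spectral inequality carries the weight. The graph $G'$ is obtained by gluing $G_R$ along the boundary edge $yb$ of the MOP $G-V(R^\circ)$, with $c$ placed on $y$. By Lemma~\ref{lem:MOP}(iii) it is a MOP.

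Only the degrees of $y$ and $c$ change. In $G'$, $y$ is adjacent to $a,b,z$ and at least one vertex of $R^\circ$. The vertex $c$ keeps $a,b$ and its neighbors in $Q^\circ$, which is nonempty. So no vertex of degree $2$ is created or destroyed, and $L(G')=L(G)$. In $G'$ all three sides of $\{a,b,y\}$ are chords: $ab$ as before, $ay$ because it separates the leaf $\{a,y,z\}$, and $yb$ because it separates the moved $G_R$. Hence $\{a,b,y\}$ is a branch triangle adjacent to the leaf $\{a,y,z\}$.

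For $\rho(G')<\rho(G)$, I first try the Perron-vector argument of Lemma~\ref{lem:ear}. Let $u$ be the Perron vector of $G'$. If $u_c\ge u_y$, moving the edges from $W_R=N_G(c)\cap V(R^\circ)$ back from $y$ to $c$ gives $\rho(G)\ge\rho(G')+2(u_c-u_y)\sum_{w\in W_R}u_w$, with strictness exactly as in that proof. The remaining case $u_y>u_c$ cannot be handled by the twin symmetry, because $y$ and $c$ are no longer twins: $y$ carries the ear $z$ and $c$ carries $Q^\circ$. So I would instead prove the inequality uniformly by a Schur complement, as the paper's outline suggests.

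Put $S=\{a,c,y\}$ and eliminate the rest, which splits into three blocks with no edges between them: $Q^\circ$ (attached to $a,c$), $\{z\}$ (attached to $a,y$), and $B=R^\circ\cup\{b\}$. Here $b$ is adjacent to all of $a,c,y$, and $R^\circ$ is attached to $b$ and to $c$ in $G$, to $y$ in $G'$. With $T_Q,T_z,T_B$ the $3\times3$ matrices $C^{\mathsf T}(\lambda I-A)^{-1}C$ and $N=\lambda I-A(S)-T_Q-T_z$, we get
$$f_G=f_{Q^\circ}f_zf_B\det(N-T_B),\qquad f_{G'}=f_{Q^\circ}f_zf_B\det(N-\Pi T_B\Pi),$$
where $\Pi$ exchanges $c$ and $y$. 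Note that $A(S)$ (edges $ac,ay$) is $\Pi$-invariant and $B$ sees $c,y$ symmetrically apart from $R^\circ$. Hence $f_{G'}-f_G$ equals $f_{Q^\circ}f_zf_B$ times $\det(\Pi N\Pi-T_B)-\det(N-T_B)$.

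Here $N-\Pi N\Pi$ encodes the difference between the pull of $Q^\circ$ on $(a,c)$ and the pull of $z$ on $(a,y)$. Explicitly, $T_z$ has entries $1/\lambda$ on the block $\{a,y\}$. Meanwhile $T_Q$ restricted to $\{a,c\}$ dominates $1/\lambda$ entrywise, and strictly on the diagonal entry at $c$ because $c$ has a neighbor in $Q^\circ$, or has two, or $|V(Q^\circ)|\ge2$ gives paths of length $\ge2$. I would expand the $3\times3$ determinant difference multilinearly and aim to write it as a product of two factors of a definite sign at $\lambda_0=\rho(G)$, one from $Q^\circ$ versus $z$ and one from $T_B$ (the $c$-versus-$y$ asymmetry caused by $R^\circ$), in analogy with~\eqref{eq:swap}. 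The goal is $f_{G'}(\lambda_0)>0$.

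To finish, note that $G'-y$ is a subgraph of $G-y$: in $G'-y$ the piece $R^\circ$ hangs only on $b$, and $z$ only on $a$. Hence $\rho(G'-y)<\lambda_0$, and by interlacing only $\rho(G')$ can be $\ge\lambda_0$, so $f_{G'}(\lambda_0)>0$ forces $\rho(G')<\lambda_0$.

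The main obstacle is the sign analysis of the $3\times3$ determinant difference. Unlike the $2\times2$ swap case it need not factor cleanly, because of the off-diagonal $a$-entries. I would first try to use the hypothesis $|V(Q^\circ)|\ge2$, via the resolvent entries being at least those of a single ear, to show $T_Q-\Pi T_z\Pi$ is entrywise nonnegative on $\{a,c\}$ with a strict diagonal. Then I would combine this with the sign of $(T_B)_{cc}-(T_B)_{yy}$. If that fails, I would fall back on the Perron split above, using the Schur complement only in the case $u_y>u_c$.
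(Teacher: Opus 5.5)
\textbf{There is a genuine gap: the decisive sign condition is never proved.} Your structural argument is fine. Your Schur-complement set-up is also exactly the paper's:
\begin{itemize}
\item the same blocks $V(Q^\circ)$, $\{z\}$, $\{b\}\cup V(R^\circ)$ over $S=\{a,c,y\}$;
\item passing to $G'$ amounts to conjugating $T_B$ by $\Pi$ (the paper instead relabels $c\leftrightarrow y$ in $G'$ to get an isomorphic $G''$, which is the same thing);
\item the same closing step via $G'-y\subseteq G-y$ and interlacing.
\end{itemize}
What is missing is the step that carries the content of the lemma, namely $\det(\Pi N\Pi-T_B)>0$ at $\lambda_0=\rho(G)$, and the route you sketch for it would not work.

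\textbf{The difference does not factor.} It is not a product of a ``$Q^\circ$ versus $z$'' factor and a ``$c$ versus $y$ in $T_B$'' factor, as in \eqref{eq:swap}. Use the following notation:
\begin{itemize}
\item $\alpha,\beta,\gamma$ are the $(a,a),(a,c),(c,c)$ entries of $T_Q$, and $e=1/\lambda$ is the common entry of $T_z$;
\item $r_B=(\lambda I-A(B))^{-1}$, with $p=(r_B)_{bb}$, $q=\sum_{w\in W_R}(r_B)_{bw}$ and $s=\sum_{w,w'\in W_R}(r_B)_{ww'}$;
\item $\mu=\gamma-e$ and $\nu=\beta-e$.
\end{itemize}
A direct expansion then gives
\begin{align*}
\det(N-T_B)-\det(\Pi N\Pi-T_B)={}&q\bigl[2\mu(\alpha-\lambda-1)-2\nu(\lambda+e+2)-2\nu^2\bigr]+q^2(2\nu-\mu)\\
&+s\bigl[\mu(\alpha+e+p-\lambda)-2\nu(e+p+1)-\nu^2\bigr].
\end{align*}

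\textbf{Your two planned ingredients do not fix this sign.} They are $T_Q\ge\Pi T_z\Pi$ entrywise on $\{a,c\}$, and $(T_B)_{cc}-(T_B)_{yy}=2q+s>0$. The first gives $\mu,\nu>0$, and it is indeed where $|V(Q^\circ)|\ge2$ enters: for a single ear, $\mu=\nu=0$, the difference vanishes, and $G'\cong G$. But the $q^2$ term is indefinite, and the brackets contain $\alpha-\lambda$ and $\alpha+e+p-\lambda$.

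\textbf{Two quantitative bounds close the argument.}
\begin{itemize}
\item $\lambda>\alpha+e+p$, from $f_{G-\{c,y\}}(\lambda)=f_{Q^\circ}f_zf_B\,(\lambda-\alpha-e-p)>0$. Together with $\mu,\nu>0$, this makes both brackets negative.
\item $\lambda>p+2q+s$, from $f_{G_R}(\lambda)=f_B\,(\lambda-p-2q-s)>0$. This gives $2q<\lambda$. So when $2\nu>\mu$ you can bound $q^2(2\nu-\mu)<\tfrac{\lambda}{2}q(2\nu-\mu)$. All $q$-terms together are then less than $q\bigl[\mu(2\alpha-\tfrac52\lambda-2)-\nu(\lambda+2e+4)-2\nu^2\bigr]<0$.
\end{itemize}
Only with these is the difference negative. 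Since $\det(N-T_B)=0$ at $\lambda_0$, this forces $\det(\Pi N\Pi-T_B)>0$, hence $f_{G'}(\lambda_0)>0$.

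\textbf{Your fallback does not help either.} The Perron comparison handles only $u_c\ge u_y$. In the remaining case $u_y>u_c$ you would need exactly the same determinant inequality at $\lambda_0=\rho(G)$, and the Schur-complement computation does not see the Perron vector of $G'$. As written, the proposal proves the structural claims and reduces $\rho(G')<\rho(G)$ to a sign condition, but it does not prove that condition.
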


\begin{proof}
In $G$ the vertex $y$ lies only in the triangles $\{a,b,y\}$ and $\{a,y,z\}$, so $yb$ is a boundary edge of $G$ and of $G-V(R^\circ)$. The graph $G'$ is obtained by gluing $G_R$ to $yb$ with $c$ placed on $y$, so it is a MOP by Lemma~\ref{lem:MOP}(iii). Only the degrees of $c$ and $y$ change: $c$ keeps $a$, $b$, and at least one vertex of $Q^\circ$, and $y$ gains at least one vertex of $R^\circ$. Hence $L(G')=L(G)$. In $G'$ all three sides of $\{a,b,y\}$ are chords, and $\{a,y,z\}$ is the triangle across $ay$.

The vertices $c$ and $y$ are twins with respect to $ab$; $c$ carries the piece $G_Q$, while $y$ carries only an ear, and $G_Q$ is larger than an ear. The inequality says that $G_R$ contributes more to the spectral radius when it is attached to the heavier twin. We make this precise.

Let $\lambda=\rho(G)$, and write $W_Q=N_G(c)\cap V(Q^\circ)$, $W_R=N_G(c)\cap V(R^\circ)$, and $N_a=N_G(a)\cap V(Q^\circ)$. Exchanging the names of $c$ and $y$ in $G'$ gives an isomorphic graph $G''$ in which $N(y)=\{a,b\}\cup W_Q$, $N(c)=\{a,b,z\}\cup W_R$, and $N(z)=\{a,c\}$. Thus $G$ and $G''$ differ only in that $G_Q$ is attached along $ac$ and the ear along $ay$ in $G$, and the other way around in $G''$. Put $S=\{a,c,y\}$, $X_1=G[V(Q^\circ)]$, $X_2=G[\{z\}]$, and $X_3=G[\{b\}\cup V(R^\circ)]$. These graphs are the same in $G$ and $G''$, no edge joins two of them, and $S$ spans exactly the edges $ac$ and $ay$ in both graphs. Each $X_i$ is a proper subgraph of the connected graph $G$, so $\lambda>\rho(X_i)$, $f_{X_i}(\lambda)>0$, and $r_i=(\lambda I-A(X_i))^{-1}$ is entrywise nonnegative. Set
$$\alpha=\sum_{w,w'\in N_a}(r_1)_{ww'},\qquad \beta=\sum_{w\in W_Q,\ w'\in N_a}(r_1)_{ww'},\qquad \gamma=\sum_{w,w'\in W_Q}(r_1)_{ww'},\qquad e=\frac1\lambda,$$
$$p=(r_3)_{bb},\qquad q=\sum_{w\in W_R}(r_3)_{bw},\qquad s=\sum_{w,w'\in W_R}(r_3)_{ww'}.$$
The vertex $b$ is adjacent to $a$, $c$, and $y$, and among the vertices of $S$ only $c$ has neighbors in $R^\circ$. Hence the Schur complement onto $S$ gives $f_G(\lambda)=f_{X_1}f_{X_2}f_{X_3}\det M$ and $f_{G''}(\lambda)=f_{X_1}f_{X_2}f_{X_3}\det M''$, where, with rows and columns indexed by $a,c,y$,
$$M=\begin{pmatrix}\lambda-\alpha-e-p&-1-\beta-p-q&-1-e-p\\-1-\beta-p-q&\lambda-\gamma-p-2q-s&-p-q\\-1-e-p&-p-q&\lambda-e-p\end{pmatrix},$$
$$M''=\begin{pmatrix}\lambda-\alpha-e-p&-1-e-p-q&-1-\beta-p\\-1-e-p-q&\lambda-e-p-2q-s&-p-q\\-1-\beta-p&-p-q&\lambda-\gamma-p\end{pmatrix}.$$
A direct expansion gives, with $u=\gamma-e$ and $v=\beta-e$,
\begin{align}
\det M-\det M''={}&q\bigl[2u(\alpha-\lambda-1)-2v(\lambda+e+2)-2v^2\bigr]+q^2(2v-u)\nonumber\\
&+s\bigl[u(\alpha+e+p-\lambda)-2v(e+p+1)-v^2\bigr].\label{eq:transfer}
\end{align}

We now determine the signs. The graph $X_1$ is connected, since it contains all vertices of the boundary cycle of $G_Q$ other than $a$ and $c$, and it has at least two vertices. Let $\{a,c,x\}$ be the triangle of $G_Q$ on $ac$; then $x\in W_Q\cap N_a$, and $x$ has a neighbor in $X_1$. Since $r_1=\sum_{k\ge0}A(X_1)^k/\lambda^{k+1}$, we have $(r_1)_{xx}\ge1/\lambda+(A(X_1)^2)_{xx}/\lambda^3>e$, and as $r_1$ is entrywise nonnegative, $\gamma\ge(r_1)_{xx}$ and $\beta\ge(r_1)_{xx}$. Hence $u>0$ and $v>0$. The graph $X_3$ is connected, so $r_3$ is entrywise positive and $p,q,s>0$. The graph $G-\{c,y\}$ consists of $X_1$, $X_2$, $X_3$, and the vertex $a$, whose neighbors in it are the vertices of $N_a$, $z$, and $b$; hence $f_{G-\{c,y\}}(\lambda)=f_{X_1}f_{X_2}f_{X_3}(\lambda-\alpha-e-p)$, and since $G-\{c,y\}$ is a proper subgraph of $G$, we obtain $\lambda>\alpha+e+p$. Similarly, $f_{G_R}(\lambda)=f_{X_3}(\lambda-p-2q-s)>0$ gives $\lambda>p+2q+s$, and in particular $2q<\lambda$.

By these inequalities, the coefficient of $s$ in~\eqref{eq:transfer} is negative, and so is the bracket multiplying $q$. If $2v\le u$, the term $q^2(2v-u)$ is at most $0$. If $2v>u$, then $q^2(2v-u)<\tfrac{\lambda}{2}q(2v-u)$, and the terms containing $q$ add up to less than $q\bigl[u(2\alpha-\tfrac52\lambda-2)-v(\lambda+2e+4)-2v^2\bigr]$, which is negative because $\alpha<\lambda$ and $u,v>0$. In either case $\det M-\det M''<0$. Since $f_G(\lambda)=0$ gives $\det M=0$, we obtain $\det M''>0$, and so $f_{G'}(\lambda)=f_{G''}(\lambda)>0$. Finally, $G'-y$ is obtained from $G-y$ by deleting the edges from $c$ to $W_R$, so $\rho(G'-y)<\lambda$, and by interlacing the second largest eigenvalue of $G'$ is less than $\lambda$. As in the proof of Lemma~\ref{lem:swap}, $f_{G'}(\lambda)>0$ forces $\rho(G')<\lambda$.
\end{proof}

The last tool describes the MOPs at which no swap applies. We call a MOP \emph{swap-free} if no chord $st$ satisfies the hypothesis of Lemma~\ref{lem:swap}.

\begin{lemma}\label{lem:skeleton}
Let $G$ be a swap-free MOP.
\begin{itemize}
\item[\rm (i)] Every vertex of degree $d\ge5$ lies in at least $\lceil (d-4)/2\rceil$ branch triangles. In particular, every vertex lying in no branch triangle has degree at most $4$ and lies in at most three triangles.
\item[\rm (ii)] Let $\Delta$ be a branch triangle, and let $\Delta T_1\cdots T_m$ be a path in $T(G)$ such that $T_1\cap\Delta=\{a,b\}$ and $T_1,\ldots,T_{m-1}$ are not branch triangles. Then each of $a$ and $b$ lies in at most two of $T_1,\ldots,T_m$.
\item[\rm (iii)] Let $T_1,\ldots,T_m$ form a path in $T(G)$ with chords $c_i=T_i\cap T_{i+1}$, and let $\{a,b\}$ be a side of $T_1$ different from $c_1$. If $a$ and $b$ each lie in at most two of the $T_i$, and every other vertex lies in at most three of them, then the vertices of $T_1\cup\cdots\cup T_m$ can be labeled $x_1,\ldots,x_{m+2}$ with $\{x_1,x_2\}=\{a,b\}$ so that $T_i=\{x_i,x_{i+1},x_{i+2}\}$ for all $i$. In particular, $T_1\cup\cdots\cup T_m$ induces $P_{m+2}^2$, in which $x_1x_2$ is the side $ab$.
\end{itemize}
\end{lemma}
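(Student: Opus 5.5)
We verify the three parts separately, using swap-freeness only through the following reformulation. Let $t$ be a vertex of degree $3$ with neighbor path $w_1w_2w_3$. Then by Lemma~\ref{lem:MOP}(ii) its only chord is $tw_2$, and its two triangles are $\{t,w_1,w_2\}$ and $\{t,w_2,w_3\}$, one on each side of $tw_2$. So the swap hypothesis at $(w_2,t)$ says that $w_2$ has, besides $t$, at least two neighbors on each side of the chord $w_2t$. Swap-freeness means this fails for every degree-$3$ vertex $t$.

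\emph{Part (i).} Let $s$ have degree $d\ge5$ and neighbor path $w_1\cdots w_d$. By Lemma~\ref{lem:MOP}(ii), $sw_2,\dots,sw_{d-1}$ are chords.
\begin{itemize}
\item For $2\le i\le d-2$, both $sw_i$ and $sw_{i+1}$ are chords. Hence $\{s,w_i,w_{i+1}\}$ is a branch triangle exactly when $w_iw_{i+1}$ is a chord.
\item Fix $i$ with $3\le i\le d-2$. On the side of $sw_i$ containing $w_{i-1}$, the vertex $s$ has the neighbors $w_1,\dots,w_{i-1}$, at least two of them. On the other side it has $w_{i+1},\dots,w_d$, also at least two. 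So $\deg(w_i)=3$ would give a swap, and therefore $\deg(w_i)\ge4$.
\item The vertex $w_i$ already lies in the triangles $\{s,w_{i-1},w_i\}$ and $\{s,w_i,w_{i+1}\}$. An extra neighbor forces a third triangle, so $w_{i-1}w_i$ or $w_iw_{i+1}$ is a chord.
\end{itemize}
Thus each of the $d-4$ vertices $w_3,\dots,w_{d-2}$ is incident with a chord among the edges $w_jw_{j+1}$, $2\le j\le d-2$. Each such chord covers at most two of these vertices, so there are at least $\lceil(d-4)/2\rceil$ such chords, hence that many branch triangles at $s$. If $s$ lies in no branch triangle, this forces $d\le4$, so $s$ lies in at most three triangles by Lemma~\ref{lem:dual}(i).

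\emph{Part (ii).} Suppose $a$ lies in at least three of $T_1,\dots,T_m$. By Lemma~\ref{lem:dual}(i), the triangles containing $a$ form a path in $T(G)$. Since $\Delta$ and $T_1$ both contain $a$ and paths in a tree are unique, $a$ lies in $\Delta,T_1,T_2,T_3$, which are consecutive along $a$'s fan; in particular $m\ge3$. Write
\[
\Delta=\{a,b,c\},\quad T_1=\{a,b,w_1\},\quad T_2=\{a,w_1,w_2\},\quad T_3=\{a,w_2,w_3\}.
\]
The triangle $T_1$ is not a branch triangle, and its sides $ab$ and $aw_1$ are chords, so $bw_1$ is a boundary edge. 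Likewise $T_2$ is not a branch triangle (as $m\ge3$) with chords $aw_1,aw_2$, so $w_1w_2$ is a boundary edge. Hence $w_1$ lies only in $T_1$ and $T_2$, and $\deg(w_1)=3$. But $a$ has the neighbors $b,c$ on one side of $aw_1$ and $w_2,w_3$ on the other, so a swap applies at $(a,w_1)$, a contradiction. The same argument applies to $b$.

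\emph{Part (iii).} Induct along the path as in the proof of Proposition~\ref{pro:Fn}.
\begin{itemize}
\item \emph{Base.} Write $T_1=\{a,b,x_3\}$. The chord $c_1\ne ab$ contains $x_3$ and exactly one of $a,b$; call that one $x_2$ and the other $x_1$.
\item \emph{Inductive claim.} For $i\ge1$, $T_i=\{x_i,x_{i+1},x_{i+2}\}$, where $x_j$ is the vertex entering at $T_{j-2}$ (for $j\ge3$), and $c_i=\{x_{i+1},x_{i+2}\}$.
\item \emph{Step $i=1$.} The vertex $x_2$ lies in at most two triangles, so it retires at $T_2$. Hence $c_2=\{x_3,x_4\}$, where $x_4$ is the third vertex of $T_2$.
\item \emph{General step.} The vertex $x_i$ lies in $T_{i-2},T_{i-1},T_i$ (only $T_{i-1},T_i$ for $i=2$), so by the three-triangle bound it cannot lie in $T_{i+1}$. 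Therefore $c_i=\{x_{i+1},x_{i+2}\}$, and $T_{i+1}=\{x_{i+1},x_{i+2},x_{i+3}\}$ with a new vertex $x_{i+3}$.
\item \emph{Distinctness.} The new vertex $x_{i+3}$ differs from all earlier $x_j$, because the triangles containing a given vertex are consecutive in $T(G)$ by Lemma~\ref{lem:dual}(i).
\end{itemize}
So $T_1\cup\cdots\cup T_m$ has vertices $x_1,\dots,x_{m+2}$ with $T_i=\{x_i,x_{i+1},x_{i+2}\}$. Its edges are exactly the edges of these triangles, so it induces $P_{m+2}^2$ with $x_1x_2=ab$.

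The main obstacle is in (i) and (ii): identifying the right degree-$3$ vertex and checking that $s$ (respectively $a$) has two neighbors on each side of the chord to it. The counting in (i) and the induction in (iii) are routine.
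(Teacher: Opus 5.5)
Parts (i) and (ii) are essentially the paper's arguments, and the induction in (iii) is the same as well. Your distinctness argument via convexity of the set of triangles containing a vertex (Lemma~\ref{lem:dual}(i)) is a valid substitute for the paper's appeal to the sides of the chords $c_i$ (Lemma~\ref{lem:MOP}(iii)).

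The one step you do not justify is the last one. Knowing that $T_i=\{x_i,x_{i+1},x_{i+2}\}$ shows only that the \emph{union} of the triangles is $P_{m+2}^2$. The statement claims that this is the subgraph of $G$ \emph{induced} on $\{x_1,\ldots,x_{m+2}\}$, that is, that no edge of $G$ lying in none of the $T_i$ joins two of the $x_j$. Your sentence ``its edges are exactly the edges of these triangles'' asserts exactly this without argument. The paper gives it a separate paragraph. The union $U$ of the $T_i$ is a MOP, and each boundary edge of $U$ that is a chord of $G$ cuts off a piece meeting $U$ only in the two ends of that chord (Lemma~\ref{lem:MOP}(iii)). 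Every edge of $G$ outside $U$ lies in one of these pieces, so it cannot join two vertices of $U$.

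An even shorter fix is an edge count. The graph $G[\{x_1,\ldots,x_{m+2}\}]$ is outerplanar on $m+2$ vertices, so it has at most $2(m+2)-3=2m+1$ edges, and $P_{m+2}^2$ already has $2m+1$. You should also treat the trivial case $m=1$ separately, since there $c_1$ does not exist and your base step does not apply. With these two additions your proof is complete.
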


\begin{proof}
(i) Let $p$ have degree $d\ge5$, with neighbors $q_1\cdots q_d$ and triangles $\tau_i=\{p,q_i,q_{i+1}\}$ as in Lemma~\ref{lem:MOP}(ii). For $3\le i\le d-2$, the chord $pq_i$ has the neighbors $q_1,\ldots,q_{i-1}$ of $p$ on one side and $q_{i+1},\ldots,q_d$ on the other, at least two on each side. As $G$ is swap-free, $\deg(q_i)\ne3$; since $q_i$ lies in $\tau_{i-1}$ and $\tau_i$, we get $\deg(q_i)\ge4$, so $q_i$ lies in a third triangle, and $q_{i-1}q_i$ or $q_iq_{i+1}$ is a chord. For $2\le j\le d-2$ the triangle $\tau_j$ already has the chords $pq_j$ and $pq_{j+1}$, so $\tau_{i-1}$ or $\tau_i$ is a branch triangle. Thus the branch triangles meet every pair $\{\tau_{i-1},\tau_i\}$ with $3\le i\le d-2$, and there are at least $\lceil (d-4)/2\rceil$ of them. A vertex of degree at most $4$ lies in at most three triangles by Lemma~\ref{lem:dual}(i).

(ii) By Lemma~\ref{lem:dual}(i), the triangles among $T_1,\ldots,T_m$ that contain $a$ are $T_1,\ldots,T_s$ for some $s$. Suppose $s\ge3$. Write $T_1=\{a,b,w\}$; then $c_1=aw$, and since $a\in T_3$, we have $T_2=\{a,w,z\}$ with $c_2=az$ and $T_3=\{a,z,z'\}$. As $2\le s-1\le m-1$, the triangles $T_1$ and $T_2$ are not branch triangles, so their sides $bw$ and $wz$ are boundary edges, so $w$ lies only in $T_1$ and $T_2$ and $\deg(w)=3$. The chord $aw$ has the neighbors $b$ and $c$ of $a$ on the side of $\Delta=\{a,b,c\}$, and the neighbors $z$ and $z'$ on the other side. This is a swap, a contradiction. The same holds for $b$.

(iii) If $m=1$, label $a$ and $b$ as $x_1$ and $x_2$ in either order and the third vertex of $T_1$ as $x_3$. Let $m\ge2$. The chord $c_1$ contains the vertex of $T_1$ outside $\{a,b\}$ and one of $a,b$; call the latter $x_2$, the other $x_1$, and the former $x_3$. Then $T_1=\{x_1,x_2,x_3\}$ and $c_1=\{x_2,x_3\}$. Suppose that $T_i=\{x_i,x_{i+1},x_{i+2}\}$ and $c_i=\{x_{i+1},x_{i+2}\}$ for some $i\le m-2$, and let $x_{i+3}$ be the vertex of $T_{i+1}$ outside $c_i$. The chord $c_{i+1}$ is a side of $T_{i+1}$ other than $c_i$. If it contained $x_{i+1}$, then $x_{i+1}$ would lie in $T_{i+1}$ and $T_{i+2}$; for $i\ge2$ it also lies in $T_{i-1}$ and $T_i$, which gives four triangles, and for $i=1$ the vertex $x_2\in\{a,b\}$ would lie in three. Hence $T_{i+1}=\{x_{i+1},x_{i+2},x_{i+3}\}$ and $c_{i+1}=\{x_{i+2},x_{i+3}\}$, which completes the induction. Finally, let $x_{m+2}$ be the vertex of $T_m$ outside $c_{m-1}=\{x_m,x_{m+1}\}$. By Lemma~\ref{lem:MOP}(iii), each $x_{i+3}$ lies on the side of $c_i$ opposite to $T_1,\ldots,T_i$, so the $x_j$ are distinct.

It remains to see that no other edge of $G$ joins two of the $x_j$. The union $U$ of $T_1,\ldots,T_m$ is a MOP whose boundary edges are boundary edges or chords of $G$, and by Lemma~\ref{lem:MOP}(iii) each such chord cuts off a piece of $G$ that meets $U$ only in the two ends of that chord. Every edge of $G$ outside $U$ lies in one of these pieces, so it does not join two vertices of $U$. Hence $T_1\cup\cdots\cup T_m$ induces $P_{m+2}^2$.
\end{proof}

\section{Main Results}\label{sec:main}

\begin{figure}[t]
\centering
\begin{tikzpicture}[scale=0.9,
   v/.style={circle,fill=black,inner sep=1.4pt}, lab/.style={font=\footnotesize}]
 \coordinate (a0) at (0,0); \coordinate (a1) at (2,0); \coordinate (a2) at (1,1.6);
 \fill[gray!25,draw=black,thin] (a1)--(a2)--(2.45,2.55)--(3.35,1.0)--cycle; \node[lab] at (2.45,1.55) {$P_{k_1}^2$};
 \fill[gray!25,draw=black,thin] (a2)--(a0)--(-1.35,1.0)--(-0.45,2.55)--cycle; \node[lab] at (-0.45,1.55) {$P_{k_2}^2$};
 \coordinate (x3) at (0.25,-1); \coordinate (x4) at (1.75,-1.2); \coordinate (x5) at (0.5,-2.1); \coordinate (x6) at (1.5,-2.3);
 \draw (a0)--(a1)--(a2)--cycle;
 \draw (a0)--(x3)--(x5) (a1)--(x4)--(x6) (a1)--(x3)--(x4)--(x5)--(x6);
 \draw[dotted,thick] (1,-2.35)--(1,-2.9);
 \foreach \p in {a0,a1,a2,x3,x4,x5,x6} \node[v] at (\p) {};
 \node[lab,left] at (a0) {$a_0=x_1$}; \node[lab,right] at (a1) {$x_2=a_1$}; \node[lab,above] at (a2) {$a_2$};
 \node[lab,left] at (x3) {$x_3$}; \node[lab,right] at (x4) {$x_4$}; \node[lab,left] at (x5) {$x_5$}; \node[lab,right] at (x6) {$x_6$};
\end{tikzpicture}
\caption{The graph $D(k;o)$. The blade on $a_0a_1$ is drawn in full; here $o_0=1$, since $x_1$ is identified with $a_0$. The other two blades are glued along $a_1a_2$ and $a_2a_0$ in the same way, with orientations $o_1$ and $o_2$.}
\label{fig:D}
\end{figure}

We first describe the configurations that remain at the end of the reduction. Let $k=(k_0,k_1,k_2)$ with $k_i\ge3$ and $o=(o_0,o_1,o_2)\in\{0,1\}^3$. The graph $D(k;o)$ is obtained from a triangle $a_0a_1a_2$ as follows: for each $i$ (indices modulo $3$), take a copy of $P_{k_i}^2$ on $x_1,\ldots,x_{k_i}$ and identify $(x_1,x_2)$ with $(a_i,a_{i+1})$ if $o_i=1$ and with $(a_{i+1},a_i)$ if $o_i=0$. It is a MOP with $k_0+k_1+k_2-3$ vertices, three vertices of degree $2$, and the single branch triangle $a_0a_1a_2$; we call the three copies of $P_{k_i}^2$ its \emph{blades} (Figure~\ref{fig:D}). If $k'\le k$ coordinatewise, then $D(k';o)$ is a subgraph of $D(k;o)$, since $P_{k_i'}^2$ is induced by the first $k_i'$ vertices of $P_{k_i}^2$; hence $\rho(D(k;o))$ is nondecreasing in each $k_i$.

For configurations with two branch triangles, let $b\ge0$ and $o\in\{0,1\}^4$. The graph $L(b;o)$ consists of two triangles $a_0a_1a_2$ and $a_0'a_1'a_2'$ and a copy of $P_{b+2}^2$ on $y_1,\ldots,y_{b+2}$, called the \emph{bridge}, where $(y_1,y_2)$ is identified with $(a_0,a_1)$ or $(a_1,a_0)$ according as $o_0=1$ or $0$, and $(y_{b+2},y_{b+1})$ is identified with $(a_0',a_1')$ or $(a_1',a_0')$ according as $o_1=1$ or $0$; in addition, a copy of $P_5^2$ is glued along its edge $x_1x_2$ to each of $a_1a_2$ and $a_2a_0$, with the orientations given by $o_2$ and $o_3$ as for $D(k;o)$, and a new vertex is joined to both ends of $a_1'a_2'$ and another to both ends of $a_2'a_0'$. (For $b=0$ the bridge is a single edge, which becomes a common side of the two triangles.) The graph $L^{\ast}(o)$ is defined in the same way with $b=2$, except that the bridge $y_1y_2y_3y_4$ is glued to the second triangle along $y_2y_4$ instead of $y_3y_4$, with $(y_2,y_4)$ identified with $(a_0',a_1')$ or $(a_1',a_0')$ according as $o_1=1$ or $0$; its two branch triangles then share the vertex $y_2$.

\begin{proof}[Proof of Theorem~\ref{thm:main}]
For $n\le5$ there is only one $n$-vertex MOP, so let $n\ge6$ and let $G\ne F_n$ be an $n$-vertex MOP. Starting from $G$, apply swaps (Lemma~\ref{lem:swap}), ear transfers (Lemma~\ref{lem:ear}), and transfers (Lemma~\ref{lem:transfer}) as long as one of them is available. Each step produces an $n$-vertex MOP with strictly smaller spectral radius, so the process stops at a MOP $G^*$ with $\rho(G^*)\le\rho(G)$, and equality holds only if $G^*=G$. At $G^*$ no operation applies: $G^*$ is swap-free, no vertex of degree $2$ lies in a triangle adjacent in $T(G^*)$ to a branch triangle, and no transfer satisfying the hypothesis of Lemma~\ref{lem:transfer} is available. By the second property, every chord that is a side of a branch triangle $\Delta$ cuts off, on the side away from $\Delta$, a MOP with at least two triangles, that is, with at least two vertices outside the chord; so the hypothesis $|V(Q^\circ)|\ge2$ of Lemma~\ref{lem:transfer} holds at every branch triangle of $G^*$. We prove that $\rho(G^*)>\rho(F_n)$ unless $G^*=F_n$. If $G^*=F_n$, then $G^*\ne G$ and $\rho(G)>\rho(G^*)=\rho(F_n)$. In either case $\rho(G)>\rho(F_n)$, which proves the theorem.

\begin{claim}[1]
If $L(G^*)=2$, then $G^*=F_n$.
\end{claim}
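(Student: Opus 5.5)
If $L(G^*)=2$, then $T(G^*)$ is a path $T_1T_2\cdots T_m$ with $m=n-2$ by Lemma~\ref{lem:dual}(ii). In particular there are no branch triangles. The plan is to apply Lemma~\ref{lem:skeleton}(iii) to this whole path, which yields $G^*=P_n^2=F_n$ directly; alternatively, we check the degree sequence and invoke Proposition~\ref{pro:Fn}.

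\textbf{Vertices other than $a,b$.} Since $G^*$ is swap-free and has no branch triangles, Lemma~\ref{lem:skeleton}(i) shows that every vertex has degree at most $4$. By Lemma~\ref{lem:dual}(i), every vertex therefore lies in at most three of the $T_i$.

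\textbf{Choice of the side $ab$.} We take the side $\{a,b\}$ of the leaf $T_1$ different from $c_1$. The triangle $T_1$ contains the degree-$2$ vertex $y$ and the chord $c_1=\{p,q\}$, so $\{a,b\}$ is $\{y,p\}$ (or $\{y,q\}$). The vertex $y$ lies in one triangle only. The remaining condition of Lemma~\ref{lem:skeleton}(iii) is that $p$ lies in at most two triangles, and this requires a choice.

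Suppose both $p$ and $q$ lie in three triangles. Then $T_2=\{p,q,r\}$, and one of $p,q$, say $q$, retires at $T_2$: the chord $c_2$ must contain exactly one of $p,q$, so say $c_2=pr$. Then $q$ lies only in $T_1$ and $T_2$, which contradicts the assumption on $q$. So, choosing the labels suitably, one of $p,q$ lies in exactly two triangles (for $m\ge2$), and we take that vertex together with $y$ as $\{a,b\}$.

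\textbf{Conclusion.} Lemma~\ref{lem:skeleton}(iii) now labels all $m+2=n$ vertices as $x_1,\ldots,x_n$ with $T_i=\{x_i,x_{i+1},x_{i+2}\}$. The union of the $T_i$ is all of $G^*$, and the final part of that lemma shows that it induces $P_n^2$. Hence $G^*=P_n^2=F_n$.

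The only delicate point is the one just handled: verifying the ``at most two'' hypothesis for the endpoints of the initial side. The previous paragraph settles it by choosing the endpoint of $c_1$ that retires at $T_2$. Note that the transfer and ear-transfer conditions are not needed here; only swap-freeness is used.
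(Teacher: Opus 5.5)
Your proof is correct, but it takes a different route from the paper's.

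\textbf{The paper's route.} The paper does not invoke Lemma~\ref{lem:skeleton} here. It proves $\Delta(G^*)\le4$ by a direct swap argument at a vertex $p$ of degree $d\ge5$. Because $T(G^*)$ is a path, each side $q_jq_{j+1}$ with $2\le j\le d-2$ is a boundary edge. Hence $\deg(q_3)=3$, and the chord $pq_3$ admits a swap. The paper then uses $\sum_v(4-\deg(v))=6$ to force the degree sequence $(4,\ldots,4,3,3,2,2)$ and concludes with Proposition~\ref{pro:Fn}.

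\textbf{Your route.} You obtain $\Delta(G^*)\le4$ at once from Lemma~\ref{lem:skeleton}(i): a path has no nodes of degree $3$, while $\lceil(d-4)/2\rceil\ge1$ for $d\ge5$. The paper's argument is really this special case re-derived. You then replace the degree count and Proposition~\ref{pro:Fn} by Lemma~\ref{lem:skeleton}(iii), applied to the whole dual path. Before doing so, you check that the endpoint $q$ of $c_1$ not lying on $c_2$ lies only in $T_1,T_2$, so the side $yq$ meets the ``at most two'' hypothesis.

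\textbf{Comparison.} Your version reuses machinery the paper needs anyway for Claims~2 and~5. It also makes Proposition~\ref{pro:Fn} superfluous, since that proposition's inductive ``oldest vertex retires'' argument is the same induction as in Lemma~\ref{lem:skeleton}(iii). The paper's version is more self-contained and exhibits the degree sequence explicitly.

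\textbf{One implicit step.} You do not say why $q$ lies in no triangle beyond $T_2$. By Lemma~\ref{lem:dual}(i), the triangles at $q$ form a path in $T(G^*)$. The only chords of $T_2$ are $c_1$ and $c_2$, and $c_2$ does not contain $q$, so that path cannot extend past $T_2$.
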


\noindent\textit{Proof.} By Lemma~\ref{lem:dual}(ii), $T(G^*)$ is a path. Suppose that $G^*$ has a vertex $p$ of degree $d\ge5$, with neighbors $q_1\cdots q_d$ as in Lemma~\ref{lem:MOP}(ii). The triangles $\{p,q_j,q_{j+1}\}$ form a subpath of $T(G^*)$, and for $2\le j\le d-2$ the triangle $\{p,q_j,q_{j+1}\}$ is joined to its two neighbors on this subpath through the chords $pq_j$ and $pq_{j+1}$; since $T(G^*)$ is a path, its side $q_jq_{j+1}$ is a boundary edge. Hence $q_3$ lies only in $\{p,q_2,q_3\}$ and $\{p,q_3,q_4\}$, so $\deg(q_3)=3$, while $p$ has the neighbors $q_1,q_2$ on one side of the chord $pq_3$ and the $d-3\ge2$ neighbors $q_4,\ldots,q_d$ on the other. This is a swap, which is impossible. Hence $\Delta(G^*)\le4$. Since $\sum_v(4-\deg(v))=4n-(4n-6)=6$ and the two vertices of degree $2$ contribute $4$, exactly two vertices have degree $3$ and all others have degree $4$. By Proposition~\ref{pro:Fn}, $G^*=F_n$. \hfill$\diamond$

\begin{claim}[2]
Let $\Delta$ be a branch triangle of $G^*$, and let $f$ be a side of $\Delta$ such that no branch triangle lies beyond $f$. Then the triangles beyond $f$ induce $P_{m+2}^2$ for some $m\ge3$, glued to $f$ along its edge $x_1x_2$.
\end{claim}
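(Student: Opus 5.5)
The plan is to read the triangles beyond $f$ as a path in $T(G^*)$ and then apply Lemma~\ref{lem:skeleton}. Write $f=\{a,b\}$ and $\Delta=\{a,b,c\}$. By Lemma~\ref{lem:MOP}(iii), the chord $f$ cuts off a MOP $H$ on the side away from $\Delta$. The triangles of $H$ form the subtree of $T(G^*)$ beyond $f$. No triangle of $H$ is a branch triangle of $G^*$. A triangle of $H$ has degree at most $2$ in this subtree, except that the triangle $T_1$ on $f$ loses its neighbor $\Delta$. Hence the subtree is a path $T_1T_2\cdots T_m$ with $T_1\cap\Delta=\{a,b\}$, and $T_m$ is a leaf of $T(G^*)$.

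Lemma~\ref{lem:skeleton}(ii) applies to the path $\Delta T_1\cdots T_m$, since $T_1,\ldots,T_{m-1}$ are not branch triangles. So each of $a,b$ lies in at most two of the $T_i$.

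Every other vertex $v$ of $H$ lies in no branch triangle of $G^*$. Indeed, all triangles containing $v$ lie in $H$: by Lemma~\ref{lem:dual}(i) they form a path in $T(G^*)$, and this path cannot cross $f$ because $v\notin f$. By Lemma~\ref{lem:skeleton}(i), $v$ then lies in at most three triangles. Lemma~\ref{lem:skeleton}(iii), applied with the side $\{a,b\}$ of $T_1$ (which differs from $c_1$), shows that $H=T_1\cup\cdots\cup T_m$ induces $P_{m+2}^2$ glued to $f$ along $x_1x_2$.

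It remains to show $m\ge3$, using the fact that no operation applies at $G^*$.

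\emph{Case $m=1$.} Then $T_1=\{a,b,y\}$ is a leaf adjacent to the branch triangle $\Delta$, with $\deg(y)=2$. An ear transfer at $y$ is available (Lemma~\ref{lem:ear}), a contradiction.

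\emph{Case $m=2$.} Then $H=P_4^2$ on $x_1,x_2,x_3,x_4$. Here $x_3$ lies in $T_1$ and $T_2$ only, so $\deg(x_3)=3$. Also $x_4$ has degree $2$ and is adjacent to $x_2$ and $x_3$. This is exactly the transfer configuration at $y=x_3$, $z=x_4$, with the roles of $a,b$ chosen so that $a=x_2$. Let $c$ be the third vertex of $\Delta$. The piece $G_Q$ is cut off by the chord $c x_2$. We need $|V(Q^\circ)|\ge2$. The main obstacle is verifying this, together with checking that the labeling matches Lemma~\ref{lem:transfer}, since the lemma singles out which side of $\Delta$ is $Q$.

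The chord $cx_2$ is a side of the branch triangle $\Delta$. As already noted in the proof of the theorem, the absence of available ear transfers at $G^*$ implies that the piece cut off beyond $cx_2$ has at least two triangles. Hence $|V(Q^\circ)|\ge2$, Lemma~\ref{lem:transfer} applies, and this contradicts the maximality of the reduction.

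Therefore $m\ge3$.
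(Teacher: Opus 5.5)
Your proposal is correct and follows the paper's proof essentially step for step. You show that the subtree beyond $f$ is a path, obtain the $P_{m+2}^2$ structure from Lemma~\ref{lem:skeleton}(i)--(iii), exclude $m=1$ by an ear transfer, and exclude $m=2$ by a transfer whose hypothesis $|V(Q^\circ)|\ge2$ comes from the observation that every side of a branch triangle in $G^*$ cuts off at least two triangles; the only cosmetic difference is that for $m=2$ you read $\deg(x_3)=3$ and $\deg(x_4)=2$ off the $P_4^2$ labeling, whereas the paper argues directly from the boundary edge $yb$ of $S_1$.
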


\noindent\textit{Proof.} The triangles beyond $f$ form a subtree of $T(G^*)$ without nodes of degree $3$, hence a path $S_1\cdots S_m$, where $S_1$ contains $f$ and $S_m$ is a leaf. A vertex of these triangles outside $f$ lies only in triangles beyond $f$, none of which is a branch triangle, so by Lemma~\ref{lem:skeleton}(i) it lies in at most three triangles; by Lemma~\ref{lem:skeleton}(ii), each end of $f$ lies in at most two of $S_1,\ldots,S_m$. By Lemma~\ref{lem:skeleton}(iii), the triangles $S_1,\ldots,S_m$ induce $P_{m+2}^2$ with $x_1x_2=f$. If $m=1$, then $S_1$ is a leaf adjacent to $\Delta$ and its vertex of degree $2$ admits an ear transfer. Suppose that $m=2$, and write $f=ab$ and $S_1=\{a,b,y\}$. The chord $S_1\cap S_2$ contains $y$ and one end of $f$, say $a$, so $S_2=\{a,y,z\}$ with $\deg(z)=2$; the side $yb$ of $S_1$ is a boundary edge, since $S_1$ is not a branch triangle, so $y$ lies only in $S_1$ and $S_2$ and $\deg(y)=3$. Then a transfer at $y$ satisfying the hypothesis of Lemma~\ref{lem:transfer} is available. Hence $m\ge3$. \hfill$\diamond$

\begin{claim}[3]
If $L(G^*)=3$, then $G^*\cong D(k;o)$ for some $o$ and some $k$ with $k_0,k_1,k_2\ge5$.
\end{claim}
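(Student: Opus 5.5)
If $L(G^*)=3$, the structure of $T(G^*)$ pins down a single branch triangle, and Claim~2 then forces the three blades, each of at least three triangles, i.e.\ $k_i=m_i+2\ge5$. The steps are as follows.

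\emph{Step 1: exactly one branch triangle.} $T(G^*)$ is a tree with maximum degree at most $3$ and, by Lemma~\ref{lem:dual}(ii), exactly $3$ leaves. In such a tree the number of leaves equals $2$ plus the number of degree-$3$ nodes, so there is exactly one node of degree $3$. Thus $G^*$ has exactly one branch triangle $\Delta=\{a_0,a_1,a_2\}$. Removing $\Delta$ from $T(G^*)$ leaves three components, one beyond each side $a_ia_{i+1}$. None of them contains a branch triangle, since $\Delta$ is the only one.

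\emph{Step 2: apply Claim~2 to each side.} Claim~2 applies to each side $f_i=a_ia_{i+1}$ of $\Delta$. It says the triangles beyond $f_i$ induce $P_{m_i+2}^2$ with $m_i\ge3$, glued to $f_i$ along its edge $x_1x_2$. Put $k_i=m_i+2\ge5$. Set $o_i=1$ if $x_1$ is identified with $a_i$, and $o_i=0$ if $x_1$ is identified with $a_{i+1}$.

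\emph{Step 3: identify $G^*$ with $D(k;o)$.} Every triangle of $G^*$ is either $\Delta$ or lies beyond exactly one side of $\Delta$. So $G^*$ is the union of $\Delta$ and the three pieces cut off by the chords $f_i$ (Lemma~\ref{lem:MOP}(iii)). These pieces pairwise meet only in vertices of $\Delta$, and no other edges exist. This is precisely the construction of $D(k;o)$.

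\emph{Where care is needed.} Claim~2 states that the triangles beyond $f$ induce $P^2_{m+2}$ with $x_1x_2=f$. For the identification we need slightly more: $x_1,x_2$ must be the ends of $f$ in the order recorded by $o_i$, and the labeling must be the one for which $P_{k_i}^2$ is glued as in the definition of $D(k;o)$. This is exactly the conclusion of Lemma~\ref{lem:skeleton}(iii), with $\{x_1,x_2\}=\{a,b\}$. Since the orientation $o_i$ is free, either order is allowed, so no case analysis is needed and the claim follows.
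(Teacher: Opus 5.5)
Your proposal is correct and follows the paper's proof essentially step for step. The leaf count (leaves $=2+{}$degree-$3$ nodes) gives a unique branch triangle $\Delta$; Claim~2 applied to each side gives the three blades $P_{k_i}^2$ with $k_i\ge5$; Lemma~\ref{lem:MOP}(iii) shows they meet only in vertices of $\Delta$; and $o_i$ is read off from the order in which the ends of each side are identified with $x_1,x_2$.
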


\noindent\textit{Proof.} A tree with maximum degree at most $3$ has two more leaves than nodes of degree $3$, so $T(G^*)$ has exactly one branch triangle $\Delta=a_0a_1a_2$. By Claim~2, applied to each side $a_ia_{i+1}$, the triangles beyond $a_ia_{i+1}$ induce $P_{k_i}^2$ with $k_i\ge5$, glued along $x_1x_2$ with $\{x_1,x_2\}=\{a_i,a_{i+1}\}$. By Lemma~\ref{lem:MOP}(iii), these three blades meet only in vertices of $\Delta$. Choosing $o_i$ according to the order of $a_i,a_{i+1}$, we obtain $G^*\cong D(k;o)$. \hfill$\diamond$

\begin{claim}[4]
If $L(G^*)=3$, then $\rho(G^*)>\rho(F_n)$.
\end{claim}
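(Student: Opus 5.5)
By Claim~3, $G^*\cong D(k;o)$ with $k_0,k_1,k_2\ge5$ and $n=k_0+k_1+k_2-3$. By Lemma~\ref{lem:band}, $\rho(F_n)<4$, so it suffices to show that either $\rho(D(k;o))>4$ or $\rho(D(k;o))>\rho(F_n)$ directly. The plan splits on the size of $k$, using that $\rho(D(k;o))$ is nondecreasing in each $k_i$ (as noted after the definition of $D(k;o)$).

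\emph{Large blades.} Let $K$ be a threshold, to be fixed by computation. If some blade is long, say $\max k_i\ge K$, I would exhibit a fixed subgraph $H$ of $D(k;o)$ with $\rho(H)>4$. Such an $H$ consists of the central triangle, short initial segments of two blades, and a long initial segment of the third. I would certify $\rho(H)>4$ by an explicit nonnegative integer test vector $z$ satisfying $z^{\mathsf T}A(H)z>4z^{\mathsf T}z$, or better the entrywise inequality $A(H)z\ge4z$ with at least one strict entry. Intuitively, $P_m^2$ has spectral density approaching $4$ only in the bulk, and the branch triangle adds a positive perturbation. Its vertices have degree $4$ plus blade degrees, so $a_i$ has degree at least $5$ or $6$, which should push $\rho$ above $4$ once all three blades have a few triangles. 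In fact I expect that for every $o$, already $k=(5,5,K)$ or even a symmetric moderate $k$ gives $\rho>4$. Since there are eight orientation vectors, up to symmetry only a couple are essentially distinct: either all three blades attach with their degree-$3$ end at a common pattern, or not. For each orientation class I would produce one certifying vector. By monotonicity, any $k$ dominating the certified $k$ then gives $\rho(G^*)>4>\rho(F_n)$.

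\emph{Small case.} If every $k$ fails to dominate a certified tuple, the remaining tuples form a finite list. Following the abstract, I expect this list to reduce to $n\le15$, that is, $k_0+k_1+k_2\le18$, which with $k_i\ge5$ means $(k_0,k_1,k_2)$ is a permutation of $(5,5,5)$, $(5,5,6)$, $(5,6,6)$, $(5,5,7)$, $(6,6,6)$, $(5,5,8)$, $(5,6,7)$, or similar. Every such $D(k;o)$ contains $D((5,5,5);o)$, which has $12$ vertices, and there are two orientation classes of it, matching the paper's "two explicit graphs on $12$ vertices". Then $\rho(G^*)\ge\rho(D((5,5,5);o))>\rho(F_{15})\ge\rho(F_n)$, where the last step uses $F_n\subseteq F_{15}$ for $n\le15$. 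The middle inequality is certified by a test vector $z$ giving a lower bound $z^{\mathsf T}Az/z^{\mathsf T}z$ for the $12$-vertex graph, together with an upper bound for $\rho(F_{15})$ from a positive vector $w$ with $A(F_{15})w\le\mu w$ entrywise (Collatz--Wielandt). Integer vectors suffice for both.

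\emph{Main obstacle.} The hard part is the middle regime. I must show that every $k$ with $n\ge16$ dominates some tuple certified to have $\rho>4$, so that the thresholds mesh exactly with $n\le15$. Tuples like $(5,5,9)$ with $n=16$ need $\rho>4$, which requires one long blade and two short ones, and this is exactly where the bound is tightest. I would first try certifying $\rho(D((5,5,9);o))>4$ and $\rho(D((5,6,8);o))>4$ and so on for every minimal tuple with $k_0+k_1+k_2=19$, choosing eigenvector-like integer vectors: roughly constant in the blade bulk and larger near the triangle. If some minimal tuple at $n=16$ has $\rho<4$, then I would instead compare it directly with $\rho(F_n)$ using the same certification method, extending the finite check as far as needed.
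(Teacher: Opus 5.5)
Your architecture matches the paper's: monotonicity of $\rho(D(k;o))$ in each $k_i$, a certificate $\rho>4$ for a graph with one long blade and two blades of length $5$, and, for the remaining cases, $\rho(G^*)\ge\rho(D((5,5,5);o))>\rho(F_{15})\ge\rho(F_n)$, via integer test vectors and a Collatz--Wielandt upper bound for $F_{15}$.

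\textbf{The gap.} The proposal does not fix the threshold $K$ or supply the finite input the claim rests on. It also misjudges where the difficulty lies. The paper's key fact is that $K=7$ already suffices: $\rho(D_{o,p}(7))>4$, where $D_{o,p}(7)=D(k';o)$ with $k'_p=7$ and the other two coordinates equal to $5$. For instance, for $o=(1,1,1)$ and $p=0$ the vector $(19,20,19,15,13,10,7,4,14,10,6,13,10,6)$ gives Rayleigh quotient $4642/1159>4$. With this there is no middle regime:
\begin{itemize}
\item if some $k_p\ge7$, then $D_{o,p}(7)\subseteq G^*$ gives $\rho(G^*)>4>\rho(F_n)$;
\item if all $k_i\le6$, then $n\le15$ and the $D((5,5,5);o)$ comparison applies.
\end{itemize}
So $(5,5,9)$ is not a tight case. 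Certifying every tuple with $k_0+k_1+k_2=19$ is unnecessary, since each of them contains some $D_{o,p}(7)$. Note also that $K=7$ is exactly the largest threshold for which your dichotomy meshes with $n\le15$: the condition $k_i\le K-1$ for all $i$ only gives $n\le 3K-6$.

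\textbf{The fallback would not terminate.} Comparing directly with $\rho(F_n)$ ``as far as needed'' is not a finite procedure if the one-long-blade graphs stayed below $4$, because $\rho(F_n)\to4$. The argument genuinely needs $\rho>4$ at some fixed blade length, and that is what the certificate supplies.

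\textbf{Orientation count.} Fixing the long blade leaves only the reflection through it. So the $24$ graphs $D_{o,p}(7)$ reduce, up to the obvious symmetries, to four representatives, each needing its own vector:
\[
((1,1,1),0),\quad ((0,1,1),0),\quad ((0,1,1),1),\quad ((0,1,1),2).
\]
The reduction to two classes holds only for the symmetric graph $D((5,5,5);o)$.

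\textbf{Minor point.} Your degree heuristic is also off: for $o=(0,1,1)$ the vertex $a_1$ has degree $4$, not at least $5$.
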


\noindent\textit{Proof.} By Claim~3, $G^*\cong D(k;o)$ with all $k_i\ge5$, and $n=k_0+k_1+k_2-3$. For $p\in\{0,1,2\}$ and $j\ge5$, let $D_{o,p}(j)$ denote $D(k';o)$, where $k'_p=j$ and $k'_i=5$ for $i\ne p$. If $k_p\ge7$ for some $p$, then $D_{o,p}(7)$ is a subgraph of $G^*$, and the Appendix gives $\rho(D_{o,p}(7))>4$; hence $\rho(G^*)>4>\rho(F_n)$ by Lemma~\ref{lem:band}. Otherwise all $k_i\le6$, so $n\le15$ and $F_n$ is a subgraph of $F_{15}$. Since $D((5,5,5);o)$ is a subgraph of $G^*$ and the Appendix gives $\rho(D((5,5,5);o))>\rho(F_{15})$, we obtain
$$\rho(G^*)\ge\rho(D((5,5,5);o))>\rho(F_{15})\ge\rho(F_n).$$
\hfill$\diamond$

\begin{claim}[5]
If $L(G^*)\ge4$, then $\rho(G^*)>4$.
\end{claim}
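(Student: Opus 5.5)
The plan is to find, inside $G^*$, a subgraph from a finite explicit list that is certified in the Appendix to have spectral radius greater than $4$; this subgraph is one of the graphs $L(b;o)$ or $L^{\ast}(o)$ defined above. Since $L(G^*)\ge4$, the weak dual $T(G^*)$ has $L(G^*)-2\ge2$ branch triangles. Choose two branch triangles $\Delta$ and $\Delta'$ joined in $T(G^*)$ by a path whose interior nodes are not branch triangles. Such a pair exists: take a longest path in the subtree spanned by the branch triangles and use its last two branch nodes. We can also choose $\Delta$ and $\Delta'$ so that, for each of them, the two sides away from the connecting path carry no further branch triangle.

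\textbf{Outer sides.} Beyond each of these two outer sides, Claim~2 gives a blade $P_{m+2}^2$ with $m\ge3$. Every such blade contains $P_5^2$ glued along $x_1x_2$ to the chord, because its first three triangles induce $P_5^2$. This matches how the copies of $P_5^2$ sit in the definition of $L(b;o)$ at $\Delta$. At $\Delta'$ the definition only uses an ear on each outer side, and a blade certainly contains an ear. The two ends are treated asymmetrically on purpose: one end can then be any branch triangle of a chain, and only the first one needs to be extremal.

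\textbf{The bridge.} Let the path between the two triangles be $\Delta S_1\cdots S_b\Delta'$, where the $S_i$ are not branch triangles. By Lemma~\ref{lem:skeleton}(ii), each end of the side $\Delta\cap S_1$ lies in at most two of the $S_i$. By Lemma~\ref{lem:skeleton}(i), interior vertices lie in at most three triangles, except that vertices of $\Delta'$ may lie in more. I would apply Lemma~\ref{lem:skeleton}(iii) to the path $S_1\cdots S_b$ together with $\Delta'$ to obtain a labeling $y_1,\dots,y_{b+2}$ with $S_i=\{y_i,y_{i+1},y_{i+2}\}$. The chord $S_b\cap\Delta'$ is then either $y_{b+1}y_{b+2}$, which gives $L(b;o)$, or, when $y_{b+1}$ retires early, a chord that skips a vertex, which gives $L^\ast(o)$. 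I need to check in this case analysis that the skip configuration can only occur with $b=2$, i.e. that the two branch triangles share $y_2$.

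\textbf{Certification.} The graphs $L(b;o)$ grow with $b$ but form a monotone family, since shortening the bridge is not a subgraph relation. The plan is therefore to certify $\rho>4$ directly for $b$ up to some threshold, say $b\le b_0$, and all orientations $o$, using integer test vectors $z$ with $z^{\mathsf T}Az>4z^{\mathsf T}z$ from the Appendix. For $b>b_0$ I would instead use only $\Delta$, its two $P_5^2$ blades, and a long initial segment of the bridge: this is $D(k;o)$ with one $k_i\ge7$, which Claim~4's Appendix entries already certify as having $\rho>4$. So only $b\le b_0$ (with $b_0$ about $4$) needs the new certificates.

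\textbf{Main obstacle.} I expect the hardest part to be the bridge structure near $\Delta'$: showing that Lemma~\ref{lem:skeleton}(iii) still applies when vertices of $\Delta'$ may have large degree, and that the only deviation from a clean zig-zag is the $L^\ast$ configuration. I would handle this by applying (iii) only to $S_1,\dots,S_b$, with both ends controlled by (ii) from the two sides, and then analyze separately how $\Delta'$ attaches to the last chord.
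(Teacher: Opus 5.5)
Your architecture matches the paper's. You take an extremal branch triangle $\Delta$ with two $P_5^2$ blades from Claim~2, analyze the bridge with Lemma~\ref{lem:skeleton}(ii) from both ends and then (iii), use $L(b;o)$ and $L^\ast(o)$ for short bridges, and use $D_{o,p}(7)$ for long ones.

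The gap is where you draw the line between short and long bridges. Using only the bridge, you reach $D_{o,p}(7)$ only when $b+2\ge7$. That leaves $b=3$ and $b=4$ to certificates for $L(3;o)$ and $L(4;o)$, and for their skip variants unless those are excluded. You do not supply these certificates, and the Appendix stops at $b\le2$ and $L^\ast$, so these two cases are unproved as written.

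The idea that closes them is to continue the zig-zag through $\Delta'$ rather than stopping at the bridge.
\begin{itemize}
\item For $b\ge3$ the skip configuration is impossible. If $S_b\cap\Delta'=\{y_b,y_{b+2}\}$, then $y_b\in\Delta'$ lies in $S_{b-2},S_{b-1},S_b$. This contradicts Lemma~\ref{lem:skeleton}(ii) for the path $\Delta' S_b\cdots S_1$. This is also the check you flagged; for $b=1$ the skip is only a relabeling of $y_1,y_2$.
\item So $\Delta'=\{y_{b+1},y_{b+2},t\}$. The side $y_{b+2}t$ of $\Delta'$ is a chord, so the triangle across it has a third vertex $w$ adjacent to $y_{b+2}$ and $t$.
\item Then $y_1,\dots,y_{b+2},t,w$ contain a copy of $P_{b+4}^2$ glued along $y_1y_2=\Delta\cap S_1$. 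Hence $b\ge3$ already gives $D_{o',p}(7)\subseteq G^*$.
\end{itemize}
After this, only $b\le2$ needs explicit certificates, and these are exactly the ones in the Appendix. Your inequalities for $L(3;o)$ and $L(4;o)$ are true for this same reason, which makes separate certificates for them superfluous.

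Two smaller points.
\begin{itemize}
\item You say both $\Delta$ and $\Delta'$ can be chosen with no branch triangle beyond their outer sides. This is false once three branch triangles lie on a chain. It is harmless, because at $\Delta'$ you need only the apex of the triangle across each outer side, not a blade, and that apex exists because those sides are chords.
\item Lemma~\ref{lem:skeleton}(i) bounds only vertices that lie in no branch triangle. You must say why a bridge vertex outside $\Delta\cup\Delta'$ lies in none. The triangles containing it form a path in $T(G^*)$. That path could reach a third branch triangle only by leaving $\Delta S_1\cdots S_b\Delta'$ through a third chord of some $S_i$, which would make $S_i$ a branch triangle.
\end{itemize}
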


\noindent\textit{Proof.} Now $T(G^*)$ has at least two branch triangles. Choose branch triangles $\Delta_1$ and $\Delta'$ at maximum distance in $T(G^*)$. If two of the three components of $T(G^*)-\Delta_1$ contained branch triangles, then one of these two components would not contain $\Delta'$; a branch triangle $\Delta''$ in it would be joined to $\Delta'$ by a path through $\Delta_1$, and so $\Delta''$ would be farther from $\Delta'$ than $\Delta_1$. Hence no branch triangle lies beyond two of the sides of $\Delta_1$. Let $\Delta_2$ be a branch triangle closest to $\Delta_1$, and let $\Delta_1T_1\cdots T_b\Delta_2$ be the path between them, so that $T_1,\ldots,T_b$ are not branch triangles and $\Delta_2$ lies beyond the third side of $\Delta_1$. By Claim~2, the first five vertices of the strip beyond each of the other two sides of $\Delta_1$ induce a copy of $P_5^2$ glued along $x_1x_2$.

Each side $f$ of $\Delta_2$ that is not on the path is a chord, so the other triangle containing $f$ supplies a vertex adjacent to both ends of $f$.

Next consider the bridge $T_1\cup\cdots\cup T_b$. If a vertex $v$ of a triangle $T_j$ lay in a branch triangle $\Delta_3\notin\{\Delta_1,\Delta_2\}$, the path in $T(G^*)$ formed by the triangles containing $v$ (Lemma~\ref{lem:dual}(i)) would leave the path $\Delta_1T_1\cdots T_b\Delta_2$ through a third chord of some $T_i$, making $T_i$ a branch triangle; so this does not happen unless $v\in\Delta_1\cup\Delta_2$. Hence every vertex of the bridge outside $\Delta_1\cup\Delta_2$ lies in at most three triangles, by Lemma~\ref{lem:skeleton}(i). By Lemma~\ref{lem:skeleton}(ii), applied to the path $T_1,\ldots,T_b$ from $\Delta_1$ and to the path $T_b,\ldots,T_1$ from $\Delta_2$, every vertex of $\Delta_1$ or $\Delta_2$ lies in at most two of $T_1,\ldots,T_b$. If $b\ge1$, Lemma~\ref{lem:skeleton}(iii) labels the vertices of the bridge as $x_1,\ldots,x_{b+2}$ with $T_j=\{x_j,x_{j+1},x_{j+2}\}$ and $\{x_1,x_2\}=\Delta_1\cap T_1$. The chord $\Delta_2\cap T_b$ is a side of $T_b$ other than $\{x_b,x_{b+1}\}$, so it is $\{x_{b+1},x_{b+2}\}$ or $\{x_b,x_{b+2}\}$. In the second case $x_b\in\Delta_2$, and if $b\ge3$, then $x_b$ lies in the three triangles $T_{b-2},T_{b-1},T_b$, contrary to Lemma~\ref{lem:skeleton}(ii) applied from $\Delta_2$. Hence the second case occurs only for $b\le2$; for $b=1$ it is the same as the first up to relabeling, and for $b=2$ it is the configuration of $L^\ast(o)$.

The pieces described above lie on different sides of the chords of $\Delta_1$ and $\Delta_2$, so by Lemma~\ref{lem:MOP}(iii) they meet only in vertices of $\Delta_1\cup\Delta_2$.

Suppose first that $b\ge3$. Then $\Delta_2\cap T_b=\{x_{b+1},x_{b+2}\}$; write $\Delta_2=\{x_{b+1},x_{b+2},t\}$. The side $x_{b+2}t$ of $\Delta_2$ is not on the path, and the triangle across it is not a leaf, since otherwise its vertex of degree $2$ would admit an ear transfer; write it as $\{x_{b+2},t,w\}$. Putting $x_{b+3}=t$ and $x_{b+4}=w$, the vertices $x_1,\ldots,x_{b+4}$ are distinct and span a copy of $P_{b+4}^2$ in which $x_1x_2$ is the side $\Delta_1\cap T_1$, since the additional edges $x_{b+1}x_{b+3}$, $x_{b+2}x_{b+3}$, $x_{b+2}x_{b+4}$, and $x_{b+3}x_{b+4}$ are sides of $\Delta_2$ and of $\{x_{b+2},t,w\}$. As $b+4\ge7$, the vertices $x_1,\ldots,x_7$, together with $\Delta_1$ and its two copies of $P_5^2$, form a copy of $D_{o',p}(7)$ for some $o'$ and $p$, and $\rho(D_{o',p}(7))>4$ by the Appendix. If $b\le2$, then $G^*$ contains a subgraph isomorphic to $L(b;o)$ or to $L^\ast(o)$ for some $o\in\{0,1\}^4$, and the Appendix gives $\rho(L(b;o))>4$ and $\rho(L^\ast(o))>4$ for all such $o$. In all cases $\rho(G^*)>4$. \hfill$\diamond$

By Claims~1, 4, and 5 and Lemma~\ref{lem:band}, $\rho(G^*)>\rho(F_n)$ unless $G^*=F_n$, as required.
\end{proof}

\section{Concluding Remarks}\label{sec:concluding}

The three operations in Section~\ref{sec:tools} lower the spectral radius for different reasons. The swap is decided by a single sign in the Schur complement onto the two ends of a chord, the ear transfer by comparing the Perron entries of two twin vertices in the resulting graph, and the transfer by a Schur complement onto three vertices, in which the piece that stays behind is compared with an ear. None of them needs any estimate of the gap $\rho(G)-\rho(F_n)$, which tends to $0$ for some families. Diagonal flips, which replace a chord by the other diagonal of its quadrilateral, are the more obvious local move, but they do not suffice: at $D((5,5,5);(1,1,1))$ every diagonal flip increases the spectral radius, and at $D((6,6,6);(1,1,1))$ no sequence of two flips decreases it. The operations used here act on whole pieces rather than on single chords, which is what allows the descent to reach $F_n$. The remaining comparison with $F_n$ is avoided for most graphs by the bound $\rho(F_n)<4$: once the operations are exhausted, every strip beyond a branch triangle that contains no further branch triangle has at least three triangles, two branch triangles then always produce spectral radius above $4$, and a single branch triangle does so unless all three strips are short, in which case the graph has at most $15$ vertices.

A natural next problem is the class of maximal planar graphs. For an $n$-vertex triangulation of the sphere, Euler's formula gives $3n-6$ edges, so the average degree tends to $6$, and one expects the minimizer to be as close to $6$-regular as the curvature condition $\sum_v(6-\deg(v))=12$ allows. An exhaustive enumeration of the triangulations with $6\le n\le12$ vertices supports this: in each case the minimizer is unique, has minimum degree at least $4$, and its degrees differ by at most $2$; it is the octahedron for $n=6$ and the icosahedron for $n=12$. The enumeration is carried out by the script \texttt{planar\_minimizers.py}, also provided as supplementary material.

\begin{que}\label{que:planar}
Which $n$-vertex maximal planar graphs have the minimum spectral radius?
\end{que}

In the planar case the weak dual is no longer a tree, so a chord no longer cuts the graph into two pieces, and the swap of Lemma~\ref{lem:swap} has no direct analogue. The twin structure used in Lemma~\ref{lem:ear}, on the other hand, is local and may still be available.

\section*{Declaration of generative AI and AI-assisted technologies in the manuscript preparation process}
During the preparation of this work, the author used Claude (Anthropic) for language refinement, technical editing, and computational verification of examples. The author reviewed and edited the output as needed and takes full responsibility for the content of the published article.

\section*{Appendix: the finite verification}

All inequalities used in Claims~4 and~5 are short enough to be checked by hand, and we list them below. The script \texttt{verify\_mop.py}, provided as supplementary material, rechecks them in exact integer arithmetic; it also checks Claim~5 directly for all orientation vectors and the identity~\eqref{eq:transfer} by symbolic expansion, and it runs in a few seconds on a laptop. Two kinds of certificates occur.
\begin{itemize}
\item[(a)] To show $\rho(H)>t$ for a rational $t$, we give a positive integer vector $x$ with $x^{\mathsf T}A(H)x>t\,x^{\mathsf T}x$.
\item[(b)] To show $\rho(H)\le t$, we give a positive integer vector $y$ with $(A(H)y)_i\le t\,y_i$ for every vertex $i$; this suffices by the Collatz--Wielandt inequality.
\end{itemize}
In the script, the vectors for (a) are obtained by scaling and rounding the Perron vector of $H$ and are then checked exactly, so a floating-point error can only make a check fail.

\medskip\noindent\textit{Claim 4.} Rotating the triangle $a_0a_1a_2$ gives
$$D(k_0,k_1,k_2;o_0,o_1,o_2)\cong D(k_1,k_2,k_0;o_1,o_2,o_0),$$
and reflecting it gives
$$D(k_0,k_1,k_2;o_0,o_1,o_2)\cong D(k_2,k_1,k_0;1-o_2,1-o_1,1-o_0).$$
Hence each of the $24$ graphs $D_{o,p}(7)$ is isomorphic to $D_{o',p'}(7)$ for one of the four pairs $(o',p')=((1,1,1),0)$, $((0,1,1),0)$, $((0,1,1),1)$, $((0,1,1),2)$, and each of the $8$ graphs $D((5,5,5);o)$ to $D((5,5,5);(1,1,1))$ or $D((5,5,5);(0,1,1))$. In the table, the vertices of $D(k;o)$ are listed as $a_0,a_1,a_2$, followed by $x_3,\ldots,x_{k_0}$ of the blade on $a_0a_1$, then by $x_3,\ldots,x_{k_1}$ of the blade on $a_1a_2$, and then by $x_3,\ldots,x_{k_2}$ of the blade on $a_2a_0$; the vertices of $F_{15}$ are listed as $x_1,\ldots,x_{15}$.

\begin{center}
{\footnotesize\begin{tabular}{lll}
\toprule
graph & certificate & value\\
\midrule
$D_{(1,1,1),0}(7)$ & (a): $(19,20,19,15,13,10,7,4,14,10,6,13,10,6)$ & $4642/1159>4$\\
$D_{(0,1,1),0}(7)$ & (a): $(10,8,8,7,6,4,3,2,6,4,2,6,5,3)$ & $941/234>4$\\
$D_{(0,1,1),1}(7)$ & (a): $(10,8,9,6,5,3,7,6,4,3,2,7,5,3)$ & $257/64>4$\\
$D_{(0,1,1),2}(7)$ & (a): $(10,8,9,6,5,3,6,4,2,7,6,5,3,2)$ & $994/247>4$\\
\midrule
$F_{15}$ & (b): $(13,21,29,36,42,46,49,50,49,46,42,36,29,21,13)$ & $\rho\le112/29$\\
$D((5,5,5);(1,1,1))$ & (a): $(10,10,10,7,5,3,7,5,3,7,5,3)$ & $722/183>112/29$\\
$D((5,5,5);(0,1,1))$ & (a): $(10,8,9,6,5,3,6,4,3,7,5,3)$ & $202/51>112/29$\\
\bottomrule
\end{tabular}}
\end{center}

\noindent Here the value in case (a) is the Rayleigh quotient $x^{\mathsf T}Ax/x^{\mathsf T}x$, and in case (b) it is $\max_i(Ay)_i/y_i$, attained at $x_3$ and $x_{13}$.

\medskip\noindent\textit{Claim 5.} Exchanging the names of $a_0'$ and $a_1'$ maps the second triangle together with its two attached triangles onto itself, so $L(b;o)$ does not depend on $o_1$ up to isomorphism. Exchanging the names of $a_0$ and $a_1$ gives $L(b;o_0,o_1,o_2,o_3)\cong L(b;1-o_0,o_1,1-o_3,1-o_2)$. The same holds for $L^\ast(o)$. Hence it suffices to treat $o_0=o_1=0$, which leaves $16$ graphs. In the table, their vertices are listed as $a_0,a_1,a_2$, followed by the vertices of the bridge other than $y_1,y_2$, then $a_2'$, then $x_3,x_4,x_5$ of the copy of $P_5^2$ on $a_1a_2$, then $x_3,x_4,x_5$ of the copy on $a_2a_0$, and finally the vertices joined to $a_1'a_2'$ and to $a_2'a_0'$. Each value is the Rayleigh quotient $x^{\mathsf T}Ax/x^{\mathsf T}x$ of the certificate of type (a), and all of them exceed $4$.

\begin{center}
{\footnotesize\begin{tabular}{llll}
\toprule
graph & $(o_2,o_3)$ & certificate (a) & value\\
\midrule
$L(0;o)$ & $(0,0)$ & $(6,7,6,5,5,3,2,4,3,2,3,3)$ & $926/231$\\
$L(0;o)$ & $(0,1)$ & $(5,5,4,4,3,2,1,3,2,1,2,2)$ & $237/59$\\
$L(0;o)$ & $(1,0)$ & $(5,5,6,4,4,3,2,4,3,2,2,2)$ & $337/84$\\
$L(0;o)$ & $(1,1)$ & $(7,6,6,5,4,3,2,5,3,2,3,3)$ & $926/231$\\
\midrule
$L(1;o)$ & $(0,0)$ & $(5,4,4,3,3,3,2,1,3,2,1,2,2)$ & $448/111$\\
$L(1;o)$ & $(0,1)$ & $(4,3,3,3,2,2,1,1,2,2,1,1,2)$ & $270/67$\\
$L(1;o)$ & $(1,0)$ & $(4,3,4,3,2,2,2,1,3,2,1,1,2)$ & $165/41$\\
$L(1;o)$ & $(1,1)$ & $(4,3,3,2,2,2,1,1,2,2,1,1,2)$ & $125/31$\\
\midrule
$L(2;o)$ & $(0,0)$ & $(4,4,4,3,3,2,3,2,1,3,2,1,1,1)$ & $201/50$\\
$L(2;o)$ & $(0,1)$ & $(4,3,3,3,3,2,2,2,1,2,2,1,1,1)$ & $153/38$\\
$L(2;o)$ & $(1,0)$ & $(4,3,4,3,3,2,3,2,1,3,2,1,1,1)$ & $374/93$\\
$L(2;o)$ & $(1,1)$ & $(4,3,3,3,2,2,2,2,1,2,2,1,1,1)$ & $286/71$\\
\midrule
$L^\ast(o)$ & $(0,0)$ & $(3,2,2,2,2,2,2,1,1,2,1,1,1,1)$ & $174/43$\\
$L^\ast(o)$ & $(0,1)$ & $(3,2,2,2,2,2,1,1,1,2,1,1,1,1)$ & $41/10$\\
$L^\ast(o)$ & $(1,0)$ & $(3,2,3,2,2,2,2,1,1,2,1,1,1,1)$ & $49/12$\\
$L^\ast(o)$ & $(1,1)$ & $(3,2,2,2,2,2,1,1,1,2,1,1,1,1)$ & $41/10$\\
\bottomrule
\end{tabular}}
\end{center}

\end{document}